\author{Hongyi Chu}
\address{Université de Lille, Lille, France}
\email{hchu@uni-osnabrueck.de}
\author{Gijs Heuts}
\address{Universiteit Utrecht, Utrecht, Netherlands}
\email{g.s.k.s.heuts@uu.nl}
\urladdr{http://sites.google.com/site/gijsheuts/}
\title{Two Models for the Homotopy Theory of $\infty$-Operads}
\newcommand{\bbO}{\bbOmega}
\newcommand{\DF}{\bbDelta_{\mathbb{F}}}
\newcommand{\DFi}{\DF^{1}}
\newcommand{\DFiop}{\DF^{1,\op}}
\newcommand{\PSeg}{\mathcal{P}_{\txt{Seg}}}
\newcommand{\PSegDF}{\PSeg(\DF)}
\newcommand{\PSegDFi}{\PSeg(\DFi)}
\newcommand{\PSegO}{\PSeg(\bbO)}
\newcommand{\PCS}{\mathcal{P}_{\txt{CS}}}
\newcommand{\PCSDF}{\PCS(\DF)}
\newcommand{\PCSDFi}{\PCS(\DFi)}
\newcommand{\PCSO}{\PCS(\bbO)}
\newcommand{\bbOint}{\bbO_{\txt{int}}}
\newcommand{\DFint}{\bbDelta_{\mathbb{F},\txt{int}}}
\newcommand{\DFiint}{\bbDelta^{1}_{\mathbb{F},\txt{int}}}
\newcommand{\DFelX}{\bbDelta_{\mathbb{F}/X}^{\txt{el}}}
\newcommand{\DFel}{\bbDelta_{\mathbb{F}}^{\txt{el}}}
\newcommand{\bbOel}{\bbO^{\txt{el}}}
\newcommand{\bbOelT}{\bbOel_{/T}}
\newcommand{\DFiintop}{\bbDelta^{1,\op}_{\mathbb{F},\txt{int}}}
\newcommand{\dext}{\partial^{\txt{ext}}}
\newcommand{\xel}{\txt{el}}
\begin{document}

\begin{abstract}
  We compare two models for \iopds{}: the complete Segal operads of
  Barwick and the complete dendroidal Segal spaces of Cisinski and
  Moerdijk. Combining this with comparison results already in the
  literature, this implies that all known models for \iopds{} are
  equivalent --- for instance, it follows that the homotopy theory of
  Lurie's \iopds{} is equivalent to that of dendroidal sets and that of
  simplicial operads.
\end{abstract}

\maketitle

\section{Introduction}
The theory of \emph{operads} is a convenient framework for organizing
a variety of algebraic structures, such as associative and
commutative algebras, or more interestingly algebras which are associative or commutative up to coherent homotopy. For us, operads will by default be
\emph{coloured} operads, i.e. we allow them to have many objects ---
these can be used to describe structures such as enriched categories or a
pair of rings together with a bimodule. Roughly speaking, an operad
$\mathbf{O}$ consists of a set of objects, for each list of objects
$(x_{1},\ldots,x_{n}, y)$ a set of multimorphisms
$\mathbf{O}(x_{1},\ldots,x_{n};y)$ from $(x_{1},\ldots,x_{n})$ to $y$,
equipped with an action of the symmetric group $\Sigma_{n}$ that
permutes the inputs $x_{i}$, and associative and unital composition
operations for the multimorphisms. More generally, we can consider
\emph{enriched} operads, where the sets of multimorphisms are replaced
by objects of some symmetric monoidal category, such as vector spaces
or chain complexes; these can be used to describe algebraic structures
such as Lie algebras or Poisson algebras. 

In topology, we typically encounter operads enriched in topological
spaces (or simplicial sets), such as the $E_{n}$-operads of
May~\cite{MayGeomIter}. There is an evident notion of (weak) homotopy equivalence between such operads and one would like to consider the category of topological operads and weak equivalences as a homotopy theory. Unfortunately,
for many purposes it can be difficult to work with this theory, because topological operads are in a sense too rigid ---
for instance, a weak equivalence between topological operads $\mathbf{P}$ and $\mathbf{Q}$ need not induce an equivalence between the homotopy theories of $\mathbf{P}$-algebras and $\mathbf{Q}$-algebras. Moreover,
one often encounters structures that are naturally seen as operad
algebras in a homotopy-coherent sense, but can be difficult to
rigidify to fit in this framework --- as a baby example, it is
reasonable to think of symmetric monoidal categories as ``commutative
monoids'' in the (2-)category of categories, but actual commutative
monoids require the associativity and symmetry conditions to hold
strictly, which is essentially never true for interesting examples.

For these reasons, it is desirable to have a usable theory of ``weak''
or homotopy-coherent operads, where composition of multimorphisms is
only associative up to a (specified) coherent choice of higher
homotopies, and homotopy-coherent algebras for them. The foundations
for a theory of such \emph{$\infty$-operads} were set up by Lurie in
\cite{HA}; his work gives a powerful framework for working with
homotopy-coherent algebraic structures, as evidenced by the many results obtained in \cite{HA} after building these foundations.

Although Lurie's model is by far the best-developed version of
\iopds{}, a number of other models have been proposed, namely the
\emph{dendroidal sets} of Moerdijk and Weiss~\cite{MoerdijkWeiss} and
the closely related models of complete dendroidal Segal spaces and
dendroidal Segal operads of Cisinski and
Moerdijk~\cite{CisinskiMoerdijkDendSeg}, and the \emph{complete Segal
  operads} of Barwick~\cite{BarwickOpCat}. Moreover, just as
simplicial categories give a model for \icats{}, we can consider
simplicial operads as a model for \iopds{}; appropriate model category
structures on this category have been constructed by Cisinski and
Moerdijk~\cite{CisinskiMoerdijkSimplOpd} and by
Robertson~\cite{RobertsonThesis}.

Some comparisons between these different models are already known:
\begin{itemize}
\item Cisinski and Moerdijk compare the three dendroidal models in
  \cite{CisinskiMoerdijkDendSeg}, and also compare dendroidal sets to
  simplicial operads in \cite{CisinskiMoerdijkSimplOpd}.
\item Barwick compares his model to Lurie's in \cite{BarwickOpCat}.
\item Heuts, Hinich, and Moerdijk obtain a partial comparison between
  dendroidal sets and Lurie's model in
  \cite{HeutsHinichMoerdijkDendrComp}. However, their result is
  restricted to operads without units.
\end{itemize}
In this paper, our goal is to prove one of the missing comparisons: we
will show that the homotopy theory of Barwick's complete Segal operads
is equivalent to that of complete dendroidal Segal spaces. To state a
more precise version of our result, recall that Barwick's Segal
operads are certain presheaves of spaces on a category $\DF$, forming
a full subcategory $\PSegDF$ of the \icat{} $\mathcal{P}(\DF)$ of
presheaves --- we will refer to them as \emph{Segal presheaves} on
$\DF$ to avoid confusion with the Segal operads of Cisinski and
Moerdijk~\cite{CisinskiMoerdijkSimplOpd}, which are a dendroidal
analogue of Segal categories. Similarly, the dendroidal Segal spaces
of Cisinski and Moerdijk are certain presheaves on a category $\bbO$
(we will likewise refer to them as \emph{Segal presheaves} on $\bbO$),
forming a full subcategory $\PSegO$ of the \icat{} $\mathcal{P}(\bbO)$
of all presheaves.  We will define a functor $\tau \colon \DFi \to
\bbO$, where $i \colon \DFi \hookrightarrow \DF$ is a certain full
subcategory, and prove:
\begin{thm}\label{thm:mainthm}
  Composition with the functors $i$ and  $\tau$ induces equivalences of \icats{}
  \[ \PSegO \isoto \PSegDFi \xleftarrow{\sim} \PSegDF.\]
  These functors restrict further to give equivalences between the full
  subcategories of complete objects.
\end{thm}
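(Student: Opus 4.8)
The plan is to regard $\DF$, $\DFi$ and $\bbO$ as algebraic patterns, to recognize $i$ and $\tau$ as morphisms of patterns, and thereby reduce the theorem to showing that each of $i$ and $\tau$ is a \emph{Segal equivalence}: that restriction of presheaves along it carries Segal objects to Segal objects and is an equivalence there. The workhorse will be the standard criterion for such equivalences, namely that a morphism of patterns $f \colon \mathcal{O} \to \mathcal{P}$ induces an equivalence $\PSeg(\mathcal{P}) \xrightarrow{\sim} \PSeg(\mathcal{O})$ as soon as it induces an equivalence on elementary objects and, for each object $X$ of $\mathcal{O}$, an equivalence between the category of inert maps out of $X$ into elementaries and the corresponding category for $f(X)$. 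The reason this suffices is that for a Segal presheaf $F$ the value $F(X)$ is computed as the limit of $F$ over precisely this category of elementary inert cones, so matching these cones forces the restriction to be fully faithful and essentially surjective onto Segal objects.

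I would dispatch the right-hand map first. Since $i \colon \DFi \hookrightarrow \DF$ is a full subcategory inclusion, $i^{*}$ is literally restriction, and $\DFi$ is set up to contain every elementary object of $\DF$ together with the inert morphisms relating them; the two patterns therefore share their elementaries and their inert-cone categories. The criterion then applies directly: a Segal presheaf on $\DF$ is the right Kan extension of its restriction along $i$, so $i^{*} \colon \PSegDF \to \PSegDFi$ is an equivalence. Concretely, one checks that each object of $\DF$ admits a Segal-core decomposition into elementaries whose structure maps all already lie in $\DFi$.

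The crux is the left-hand map $\tau^{*} \colon \PSegO \to \PSegDFi$, where the work is genuinely combinatorial because the two indexing categories are of different flavours --- trees on the dendroidal side, simplices decorated by finite sets on Barwick's side. I would first pin down $\tau$ on elementaries: the elementary objects of $\bbO$ are the unit tree (a colour) and the corollas $C_{n}$ (an $n$-ary operation with its $\Sigma_{n}$-action), and I must verify that $\tau$ restricts to an equivalence $\DFel \xrightarrow{\sim} \bbOel$ matching colours with the unit tree and the $n$-ary elementaries of $\DFi$ with $C_{n}$, symmetries included. I would then identify, for each object of $\DFi$, its category of elementary inert cones with that of the corresponding tree, i.e.\ match the decomposition of a tree into its corollas with the decomposition of the associated Barwick object into its elementary pieces. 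Checking this compatibility of Segal-core decompositions, with all symmetric-group actions and degeneracies accounted for, is where I expect the difficulty to concentrate; once it is in place the same criterion yields that $\tau^{*}$ is a Segal equivalence.

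Finally, for completeness I would use that in each model the completeness condition is the completeness of the underlying simplicial object --- the restriction along the copy of $\simp$ sitting inside $\bbO$ as the linear trees and inside $\DFi$ (and $\DF$) as its unary part. Since $\tau$ and $i$ respect these copies of $\simp$, a Segal presheaf in the source is complete exactly when its image is, so each equivalence restricts to the full subcategories of complete objects, giving $\PCSO \simeq \PCSDFi \simeq \PCSDF$ as asserted.
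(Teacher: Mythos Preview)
Your criterion --- that $f \colon \mathcal{O} \to \mathcal{P}$ induces an equivalence on Segal presheaves as soon as it induces an equivalence on elementaries and on the categories $\mathcal{O}^{\txt{el}}_{/X} \simeq \mathcal{P}^{\txt{el}}_{/f(X)}$ for each $X$ --- is not sufficient, and this is precisely where the real content of the paper lies. What your criterion controls is only the \emph{inert} part of the structure: it guarantees that $f^{*}$ carries Segal presheaves to Segal presheaves, and indeed that $\tau_{\txt{int}}^{*} \colon \PSeg(\bbOint) \to \PSeg(\DFiint)$ is an equivalence (both sides being equivalent to $\mathcal{P}(\bbOel) \simeq \mathcal{P}(\DFel)$). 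The paper records exactly your two conditions as Lemma~\ref{lem:taueleq} and dispatches them without proof. But a Segal presheaf is \emph{not} determined by its values on elementaries: its functoriality along \emph{active} morphisms is genuine extra data, and your criterion says nothing about how $f$ interacts with active maps. Concretely, both $\PSegO$ and $\PSegDFi$ are monadic over $\mathcal{P}(\bbOel)$, and your argument identifies the underlying objects but not the monads. (For a blunt illustration of the gap: take $f$ to be the inclusion of the inert subcategory $\mathcal{P}_{\txt{int}} \hookrightarrow \mathcal{P}$; your two conditions hold trivially, yet $f^{*}$ is essentially never an equivalence on Segal presheaves.)

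The paper closes this gap by a monadicity argument together with the key technical input, Proposition~\ref{propn:tau*segeq}, that $\tau^{*} \colon \mathcal{P}(\bbO) \to \mathcal{P}(\DFi)$ preserves Segal equivalences. This is where the genuine combinatorics happens: one must show that for each tree $T$ the map $\tau^{*}(T_{\Seg}) \to \tau^{*}T$ is a Segal equivalence in $\mathcal{P}(\DFi)$, and since $\tau^{*}T$ is not representable this requires an explicit filtration of $\tau^{*}T$ (by ``admissible'' simplices) whose associated graded pieces are inner-horn inclusions, together with an induction on the number of vertices of $T$ via the external boundary $\dext T$. Once this is in hand, $\tau_{*}$ preserves Segal objects, and a comparison of the two monads (via the unit/counit and Lemma~\ref{lem:unitel}) finishes the proof. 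Your plan for $i^{*}$ and for the completeness statement is fine and matches the paper, but for $\tau^{*}$ you are missing the entire active-morphism analysis.
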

Here the \emph{complete} objects are those whose underlying Segal
spaces are complete in the sense of Rezk~\cite{RezkCSS}. We will prove that $i$ gives
an equivalence in Lemma~\ref{lem:ieq}, that $\tau$ gives an
equivalence in Theorem~\ref{thm:taueq}, and that we get equivalences
on complete objects in Corollary~\ref{cor:itauCSeq}. In fact, these subcategories of complete objects are localizations of the respective \icats{} of Segal presheaves. A map of dendroidal Segal spaces is known to become an equivalence after completion if and only if it is \emph{fully faithful} and \emph{essentially surjective} by a result of Cisinski and Moerdijk. In Corollary \ref{cor:complete} we apply Theorem \ref{thm:mainthm} to deduce the analogous statement for Barwick's Segal operads.

Combining Theorem~\ref{thm:mainthm} with the above-mentioned
comparison results already in the literature, this implies that all
known models for \iopds{} are equivalent. In particular, we obtain the
following interesting comparisons as an immediate consequence of our
work, answering a question of Lurie~\cite{HA}:
\begin{cor}
  The homotopy theory of Lurie's \iopds{} is equivalent to that of
  dendroidal sets and to that of simplicial
  operads.
\end{cor}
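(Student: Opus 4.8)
The plan is to realize the two stated equivalences as composites of known comparisons, using Theorem~\ref{thm:mainthm} to bridge the gap between the Barwick/Lurie world and the dendroidal world. Concretely, I would exhibit a chain of equivalences of \icats{} connecting Lurie's \iopds{}, Barwick's complete Segal operads, the complete dendroidal Segal spaces of Cisinski and Moerdijk, dendroidal sets, and simplicial operads, and then read off the two comparisons in the statement.

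First, I would invoke Barwick's comparison in \cite{BarwickOpCat} to identify the homotopy theory of Lurie's \iopds{} with that of complete Segal operads, i.e.\ with the full subcategory of complete objects in $\PSegDF$. Next, the final clause of Theorem~\ref{thm:mainthm} provides an equivalence between the complete objects of $\PSegDF$ and those of $\PSegO$, that is, between Barwick's complete Segal operads and the complete dendroidal Segal spaces of Cisinski and Moerdijk; this is the one genuinely new input. It then remains to connect complete dendroidal Segal spaces to the remaining dendroidal models: the comparison of the three dendroidal models in \cite{CisinskiMoerdijkDendSeg} yields an equivalence with dendroidal sets, and the comparison in \cite{CisinskiMoerdijkSimplOpd} yields an equivalence of dendroidal sets with simplicial operads. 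Composing along this chain gives both asserted equivalences.

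The argument is formal once the pieces are assembled, so I do not expect a serious mathematical obstacle; the real work is already done in Theorem~\ref{thm:mainthm}. The main care required is bookkeeping of two kinds. First, the cited comparisons of Barwick and of Cisinski and Moerdijk are phrased in terms of model structures, so one must pass to the associated \icats{} (Dwyer--Kan localizations) in order to splice them to the \icat{}-level statement of Theorem~\ref{thm:mainthm}; this is routine but should be made explicit. Second, one must check that the notion of completeness used in Theorem~\ref{thm:mainthm} agrees with the completeness appearing on either side, namely in Barwick's comparison to Lurie's model and in the Cisinski--Moerdijk comparison to dendroidal sets. Since all of these are defined uniformly through the underlying Segal spaces in the sense of Rezk~\cite{RezkCSS}, these identifications should be immediate, and no new difficulty arises.
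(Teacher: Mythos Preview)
Your proposal is correct and matches the paper's own approach: the corollary is stated as an immediate consequence of Theorem~\ref{thm:mainthm} combined with the comparison results of Barwick~\cite{BarwickOpCat} and Cisinski--Moerdijk~\cite{CisinskiMoerdijkDendSeg,CisinskiMoerdijkSimplOpd}, exactly the chain of equivalences you describe. The paper does not spell out the model-category-to-\icat{} bookkeeping or the compatibility of completeness notions, so your remarks on these points are if anything more careful than the original.
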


Although we have chosen to use the language of \icats{} in this paper,
as we believe this leads to a cleaner presentation of our work,
our result can also be interpreted in the language of model
categories: the \icats{} $\PSegO$, $\PSegDFi$, and $\PSegDF$ can be
obtained from Bousfield localizations of the projective (or Reedy)
model structures on the categories $\Fun(\bbO^{\op}, \sSet)$,
$\Fun(\DFiop, \sSet)$, and $\Fun(\DF^{\op}, \sSet)$ of simplicial
presheaves on $\bbO$, $\DFi$, and $\DF$, respectively. Moreover, it is
easy to see that composition with $i$ with $\tau$ give right Quillen
functors between these localized model structures (with left adjoints
given by left Kan extensions). In this language, our result says:
\begin{cor}
  The Quillen adjunctions
  \[ \tau_{!} : \txt{Fun}(\DFiop, \sSet) \rightleftarrows
  \Fun(\bbO^{\op}, \sSet) : \tau^{*},\]
  \[ i_{!} : \Fun(\DFiop, \sSet) \rightleftarrows
  \Fun(\DF^{\op}, \sSet) : i^{*},\]
  are Quillen equivalences, where the categories involved are equipped with the Bousfield localizations of the respective projective model structures at the Segal equivalences. Moreover, they remain Quillen equivalences if we
  localize further to get the model structures for complete objects.
\end{cor}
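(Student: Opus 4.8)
The plan is to deduce this corollary formally from Theorem~\ref{thm:mainthm} using the standard dictionary between combinatorial model categories and presentable \icats{}. This dictionary has two ingredients. First, a left Bousfield localization of a left proper combinatorial model category at a set of maps presents, on the level of underlying \icats{}, the corresponding accessible reflective localization onto the full subcategory of local objects. Second, a Quillen adjunction is a Quillen equivalence if and only if the induced adjunction of underlying \icats{} is an adjoint equivalence. Granting these two facts, the corollary reduces to matching up the relevant \icats{} and functors with those appearing in the theorem.

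First I would identify the underlying \icats{}. The projective model structure on $\Fun(\DF^{\op}, \sSet)$ presents the presheaf \icat{} $\mathcal{P}(\DF)$, and its left Bousfield localization at the Segal equivalences presents the reflective localization of $\mathcal{P}(\DF)$ at these maps. Since by definition the Segal presheaves are precisely the objects local with respect to the Segal equivalences, this localization is exactly $\PSegDF$; the same argument identifies the underlying \icats{} of the localized structures on $\Fun(\DFiop, \sSet)$ and $\Fun(\bbO^{\op}, \sSet)$ with $\PSegDFi$ and $\PSegO$. Localizing further at the completeness maps presents in the same way the full subcategories $\PCSDF$, $\PCSDFi$, and $\PCSO$ of complete objects.

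Next I would identify the derived right adjoints. The functors $\tau^{*}$ and $i^{*}$ are restriction along $\tau$ and $i$; since they preserve objectwise weak equivalences, on underlying \icats{} they model the restriction functors $\mathcal{P}(\bbO) \to \mathcal{P}(\DFi)$ and $\mathcal{P}(\DF) \to \mathcal{P}(\DFi)$. As right Quillen functors for the localized structures they moreover carry local (respectively complete) fibrant objects to local (respectively complete) objects, so on the underlying \icats{} of the localizations they restrict to exactly the functors $\tau^{*} \colon \PSegO \to \PSegDFi$ and $i^{*} \colon \PSegDF \to \PSegDFi$ (together with their complete analogues $\PCSO \to \PCSDFi$ and $\PCSDF \to \PCSDFi$) of Theorem~\ref{thm:mainthm}. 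That theorem asserts these are equivalences of \icats{}, so by the second ingredient of the dictionary the Quillen adjunctions $\tau_{!} \dashv \tau^{*}$ and $i_{!} \dashv i^{*}$ are Quillen equivalences, both for the Segal localizations and for the further localizations at the completeness maps.

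The content of this argument is almost entirely bookkeeping: once Theorem~\ref{thm:mainthm} is known, the corollary follows from general principles, so there is no serious obstacle. The only points requiring care are the existence of the left Bousfield localizations at the Segal and completeness equivalences --- which holds because $\Fun(\DF^{\op}, \sSet)$ and its companions are left proper and combinatorial and we localize at a set of maps --- and the already-noted fact that $\tau_{!}$ and $i_{!}$ descend to left Quillen functors between the localized structures, equivalently that $\tau^{*}$ and $i^{*}$ preserve local objects. The latter is where one must invoke the compatibility of $\tau$ and $i$ with the Segal (and completeness) conditions, but this is the same input that underlies the well-definedness of the restriction functors in the theorem, so no genuinely new verification is needed.
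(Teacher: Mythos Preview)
Your proposal is correct and follows essentially the same approach as the paper: both deduce the corollary formally from Theorem~\ref{thm:mainthm} via the standard criterion that a Quillen adjunction is a Quillen equivalence precisely when it induces an equivalence at the homotopy level. The paper's proof is a single sentence invoking the homotopy-category version of this criterion, whereas you spell out the \icat{}-level dictionary in more detail; this extra care is fine but not strictly necessary, since the paper already records (just before the corollary) that the relevant adjunctions are Quillen and that the localized model structures present $\PSegDF$, $\PSegDFi$, and $\PSegO$.
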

Since a Quillen adjunction is a Quillen equivalence \IFF{} it induces
an equivalence of homotopy categories, this is an immediate
consequence of Theorem~\ref{thm:mainthm}.

\subsection{Overview}
In \S\ref{sec:SegPShDF} we review the definition of Barwick's Segal
operads, which we will call Segal presheaves on $\DF$. We also show
that we can equivalently consider Segal presheaves on a full
subcategory $\DFi$ of $\DF$. Next, in \S\ref{sec:SegPshDendr} we
review the dendroidal Segal spaces of Cisinski and Moerdijk, which we
will similarly refer to as \emph{Segal presheaves} on $\bbO$. In
\S\ref{sec:DFitobbO} we define the functor $\tau$ from $\DFi$ to $\bbO$,
and then in \S\ref{sec:comparison} we prove our main comparison
result, namely that composing with $\tau$ gives an equivalence between
the two \icats{} of Segal presheaves. Finally, in \S\ref{sec:complete}
we review the definition of \emph{complete} Segal presheaves on $\DF$
and $\bbO$, and observe that these agree under our equivalence.

\subsection{Notation}
This paper is written in the language of \icats{} (or more specifically \emph{quasicategories}), as developed by Joyal~\cite{JoyalQCNotes},
Lurie~\cite{HTT,HA} and others. We will use
terminology from \cite{HTT}; here we give a few reminders:
\begin{itemize}
\item $\mathcal{S}$ is the \icat{} of spaces (or
  $\infty$-groupoids).
\item If $\mathcal{C}$ is an \icat{}, we write
  $\mathcal{P}(\mathcal{C})$ for the \icat{}
  $\Fun(\mathcal{C}^{\op},\mathcal{S})$ of presheaves of spaces on
  $\mathcal{C}$.
\item $\simp$ is the usual simplicial indexing category. We say a
  morphism $\phi \colon [n] \to [m]$ is \emph{inert} if it is the
  inclusion of a subinterval in $[m]$, i.e. if $\phi(i) = \phi(0)+i$ for
  all $i$, and \emph{active} if it preserves the end-points, i.e. if
  $\phi(0) = 0$ and $\phi(n) = m$. The active and inert morphisms form
  a factorization system on $\simp$.
\end{itemize}

\subsection{Acknowledgments}
This project began through discussions between the second author and
Fernando Muro during a visit to the University of Sevilla. The first author thanks Denis-Charles Cisinski, David Gepner, and especially his PhD advisor Markus Spitzweck for helpful discussions. Moreover, he wishes to acknowledge the support of the DFG during his PhD and of the Labex CEMPI (ANR-11-LABX-0007-01) during his postdoc. The third author was supported by the European Research Council (ERC) under the European Union's Horizon 2020 research and innovation programme (grant agreement No. 682922).

\section{Segal Presheaves on \texorpdfstring{$\DF$}{DF} and \texorpdfstring{$\DFi$}{DFi}}\label{sec:SegPShDF}
In this section we review the model for \iopds{} introduced by Barwick
in \cite{BarwickOpCat}, which we will refer to as \emph{Segal
  presheaves on $\DF$}. We also show that these are equivalent to
Segal presheaves on a full subcategory $\DFi$, which will be easier to
relate to the dendroidal category later on.

\begin{defn}
  Write $\mathbb{F}$ for a skeleton of the category of finite sets (possibly
  empty), i.e. the category with objects $\mathbf{k} :=
  \{1,\ldots,k\}$, $k = 0,1,\ldots$, and morphisms maps of sets. Let $\DF$ be the category with
  objects pairs $([n], f \colon [n] \to \mathbb{F})$ with a morphism
  $([n], f) \to ([m], g)$ given by a morphism $\phi \colon [n] \to
  [m]$ in $\simp$ and a natural transformation $\eta \colon f \to g
  \circ \phi$ such that
  \begin{enumerate}[(i)]
  \item the map $\eta_{i} \colon f(i) \to g(\phi(i))$ is injective for
    all $i = 0,\ldots,m$,
  \item the commutative square
    \csquare{f(i)}{g(\phi(i))}{f(j)}{g(\phi(j))}{\eta_{i}}{}{}{\eta_{j}}
    is a pullback square for all $0 \leq i \leq j \leq m$.
  \end{enumerate}
  We say an object $([n], f) \in \DF$ has \emph{length} $n$.
\end{defn}

\begin{notation}
  If $([n], f)$ is an object of $\DF$, we will write $f^{ij} \colon
  f(i) \to f(j)$ for the image under $f$ of the map $i \to j$ in
  $\mathbf{n}$; we abbreviate $f^{i(i+1)}$ to $f^{i+1}$.
\end{notation}

\begin{remark}
  An object of $\DF$ is thus a sequence 
  \[ \mathbf{k}_{0} \to \mathbf{k}_{1} \to \cdots \to
  \mathbf{k}_{n} \]
  of maps of finite sets. If $\mathbf{k}_{n} = \mathbf{1}$, we can
  think of this as a tree with levels: we think of the elements of the
  sets $\mathbf{k}_{i}$ as the edges of the tree --- in particular,
  $\mathbf{k}_{0}$ is the set of leaves, and the map $\mathbf{k}_{i}
  \to \mathbf{k}_{i+1}$ assigns to an edge $e$ in level $i$ the unique
  outgoing edge of the vertex that has $e$ as an incoming edge; thus
  we can also think of the elements of $\mathbf{k}_{i}$ with $i > 0$
  as the vertices of the tree. A general object of $\DF$ can then be
  thought of as a ``forest'', i.e. a collection of trees indexed by
  $\mathbf{k}_{n}$. To define Segal presheaves we now want to impose
  relations on presheaves on $\DF$ that force the value on a forest  to
  decompose into the values at the basic corollas, corresponding to
  the objects $([1], \mathbf{n} \to \mathbf{1})$, as well as the single
  edge, $([0], \mathbf{1})$.
\end{remark}

\begin{remark}
  Since morphisms in $\DF$ are required to induce pullback
  squares, given an object $([m], f) \in \DF$ and a morphism $q
  \colon \mathbf{a} \to f(m)$, there exists an essentially unique
  morphism $([m], f_{\mathbf{a}}) \to ([m], f)$ over $\id_{[m]}$
  with value $q$ at $m$.  
\end{remark}

\begin{remark}
  The projection $\DF \to \simp$ is a Grothendieck fibration: given
  $([n], f)$ and $\phi \colon [m] \to [n]$, the map $\phi^{*}([n], f)
  := ([m], f \circ \phi) \to ([n], f)$ is a Cartesian morphism. In
  general a morphism $(\phi, \eta) \colon ([m], g) \to ([n], f)$ is
  Cartesian \IFF{} $\eta_{i} \colon g(i) \to f(\phi(i))$ is an
  isomorphism for all $i$. 
\end{remark}

\begin{defn}
  We say a map $(\phi, \eta) \colon ([n], f) \to ([m], g)$ in $\DF$ is
  \begin{enumerate}
  \item \emph{injective} if $\phi \colon [n] \to [m]$ is injective,
  \item \emph{surjective} if $\phi$ is surjective,
    and $\eta_{i} \colon f(i) \to g(\phi(i))$ is an isomorphism for
    all $i$ (or equivalently, if $\phi$ is surjective and $(\phi, \eta)$
    is Cartesian),
  \item \emph{inert} if $\phi$ is inert in $\simp$,
  \item \emph{active} if $\phi$ is active in $\simp$, and $\eta_{i}
    \colon f(i) \to g(\phi(i))$ is an isomorphism for all $i$ (or
    equivalently, if $\phi$ is active and $(\phi, \eta)$ is
    Cartesian).
  \end{enumerate}
  The surjective and injective maps, as well as the active and inert
  maps, form factorization systems on $\DF$ --- this is clear since
  they are both lifted from factorization systems on $\simp$ via the
  fibration $\DF \to \simp$. We write $\DFint$ for the subcategory of
  $\DF$ containing only the inert maps. 
\end{defn}

A presheaf $\mathcal{F} \colon \DF^{\op} \to \mathcal{S}$ is a
\emph{Segal presheaf} if it satisfies the following three ``Segal conditions'':
\begin{enumerate}[(1)]
\item for every object $([n], \mathbf{k}_{0} \to \mathbf{k}_{1} \to \cdots \to
\mathbf{k}_{n})$ of $\DF$, the natural map
\[ \mathcal{F}([n], \mathbf{k}_{0} \to \cdots \to \mathbf{k}_n) \to
\mathcal{F}([1], \mathbf{k}_0 \to \mathbf{k}_1)
\times_{\mathcal{F}([0], \mathbf{k}_1)} \cdots
\times_{\mathcal{F}([0], \mathbf{k}_{n-1})} \mathcal{F}([1], \mathbf{k}_{n-1} \to \mathbf{k}_n)
\]
is an equivalence,
\item for every object $([1], \mathbf{k} \to \mathbf{l})$, the natural map
\[ \mathcal{F}([1], \mathbf{k} \to \mathbf{l}) \to \prod_{i = 1}^{l}
\mathcal{F}([1], \mathbf{k}_{i} \to \mathbf{1})\] (where $\mathbf{k}_{i}$
is the fibre of $\mathbf{k}$ over $i \in \mathbf{l}$) is an
equivalence,
\item for every object $([0], \mathbf{k})$, the natural map
  \[ \mathcal{F}([0], \mathbf{k}) \to \prod_{i = 1}^{k}
  \mathcal{F}([0], \mathbf{1})\]
  is an equivalence.
\end{enumerate}
For us, a more convenient formulation of this definition will be the following:
\begin{defn}
  Let $\DF^{\txt{el}}$ denote the full subcategory of $\DFint$ spanned
  by the objects $([1], \mathbf{k} \to \mathbf{1})$, for $k \geq 0$, and $([0],
  \mathbf{1})$, which we refer to as \emph{elementary objects}. Then we say a presheaf $\mathcal{F} \colon \DF^{\op}
  \to \mathcal{S}$ is a \emph{Segal presheaf} if the restriction
  $\mathcal{F}|_{\DFint^{\op}}$ is the right Kan extension of its
  restriction to $\DF^{\txt{el},\op}$. We write $\PSegDF$ for the full
  subcategory of $\mathcal{P}(\DF)$ spanned by the Segal presheaves.
\end{defn}

\begin{remark}
  For $X \in \DF$, write $\DFelX$ for the category $\DF^{\txt{el}}
  \times_{\DFint} (\DFint)_{/X}$; then $\mathcal{F} \in
  \mathcal{P}(\DF)$ is a Segal presheaf \IFF{} for every $X \in \DF$
  the map $\mathcal{F}(X) \to \lim_{E \in \DFelX} \mathcal{F}(E)$ is
  an equivalence. If we let $X_{\Seg}$ denote the presheaf $\colim_{E
   \in \DFelX} E$ (where we regard $E$ as a presheaf via the
  Yoneda embedding), then this means that $\mathcal{F}$ is a Segal
  presheaf \IFF{} it is local with respect to the maps $X_{\Seg} \to
  X$ for $X \in \DF$. Thus $\PSegDF$ is the localization of
  $\mathcal{P}(\DF)$ with respect to these maps --- in particular, it
  is an accessible localization of $\mathcal{P}(\DF)$; we write
  $L_{\DF} \colon \mathcal{P}(\DF) \to \PSegDF$ for the localization
  functor. We call the local equivalences for this localization,
  i.e. the maps that are sent to equivalences by $L_{\DF}$, the
  \emph{Segal equivalences} in $\mathcal{P}(\DF)$.
\end{remark}

\begin{defn}
  Let $\DFi$ be the full subcategory of $\DF$ spanned by the objects
  $([n], f)$ such that $f(n) = \mathbf{1}$. The active-inert and
  surjective-injective factorization systems on $\DF$ clearly
  restrict to factorization systems on $\DFi$. We write $\DFiint$
  for the subcategory of $\DFi$ containing only the inert maps. Since
  $\DF^{\txt{el}}$ is a full subcategory of $\DFiint$, we can again
  define a presheaf $\mathcal{F} \colon \DFiop \to \mathcal{S}$ to be
  a \emph{Segal presheaf} if the restriction
  $\mathcal{F}|_{\DFiintop}$ is a right Kan extension of its
  restriction to $\DF^{\txt{el},\op}$.  Let $i \colon \DFi
  \hookrightarrow \DF$ denote the inclusion. Then it is clear from the
  definition that composition with $i$ induces a functor $i^{*} \colon
  \PSegDF \to \PSegDFi$.
\end{defn}

\begin{remark}
  A presheaf $\mathcal{F} \in
  \mathcal{P}(\DFi)$ is again a Segal presheaf \IFF{} $\mathcal{F}$ is local
  with respect to the maps $X_{\Seg} \to X$ for $X \in \DFi$. Thus
  $\PSegDFi$ is the localization of $\mathcal{P}(\DFi)$ with respect to
  these maps --- in particular, it is an accessible localization of
  $\mathcal{P}(\DFi)$; we write $L_{\DFi} \colon \mathcal{P}(\DFi) \to
  \PSegDFi$ for the localization functor. We call the local equivalences
  for this localization, i.e. the maps that are sent to equivalences
  by $L_{\DFi}$, the \emph{Segal equivalences} in
  $\mathcal{P}(\DFi)$.
\end{remark}

\begin{lemma}\label{lem:ieq}
  The functor $i^{*} \colon \PSegDF \to \PSegDFi$ is an equivalence.
\end{lemma}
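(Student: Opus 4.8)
The plan is to exhibit the right adjoint $i_{*} \colon \mathcal{P}(\DFi) \to \mathcal{P}(\DF)$ of $i^{*}$, given by right Kan extension along $i$, as a quasi-inverse to $i^{*}$ on Segal presheaves. Since $i$ is fully faithful, the counit $i^{*} i_{*} \to \id$ is an equivalence, so $i^{*} i_{*} \mathcal{G} \simeq \mathcal{G}$ for every $\mathcal{G}$; it therefore remains to show that $i_{*}$ carries $\PSegDFi$ into $\PSegDF$ and that the unit $\mathcal{F} \to i_{*} i^{*} \mathcal{F}$ is an equivalence for every Segal presheaf $\mathcal{F}$ on $\DF$. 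Granting these, $i^{*}$ and $i_{*}$ restrict to inverse equivalences between $\PSegDF$ and $\PSegDFi$, proving the lemma.

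The key point is a product formula for $i_{*}$. Fix $X = ([n], f) \in \DF$ with $f(n) = \mathbf{l}$. For each $j \in \mathbf{l}$, applying the fibre construction of the earlier remark to the map $j \colon \mathbf{1} \to f(n)$ produces an object $([n], f_{j}) \in \DFi$ --- where $f_{j}(i)$ is the fibre of $f^{in} \colon f(i) \to f(n)$ over $j$ --- together with an inert map $\iota_{j} \colon ([n], f_{j}) \to X$. The limit computing $i_{*}$ is indexed by the comma category $(\DFi)_{/X} := \DFi \times_{\DF} (\DF)_{/X}$, whose objects are pairs $(Y, \psi \colon iY \to X)$, so that $(i_{*}\mathcal{G})(X) \simeq \lim_{(Y,\psi) \in (\DFi)_{/X}} \mathcal{G}(Y)$. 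I claim this category decomposes as a disjoint union $\coprod_{j \in \mathbf{l}} (\DFi)_{/X}^{j}$, where $(Y, \psi)$ lies in the summand $j$ determined by the image in $f(n) = \mathbf{l}$ of the element of $f(\phi(m))$ that $\psi$ picks out at the top; since every object of the tree $Y$ flows to its unique root, morphisms in $(\DFi)_{/X}$ preserve this summand. The crux is that $\iota_{j}$ is a \emph{terminal} object of the summand $(\DFi)_{/X}^{j}$: given $(Y, \psi)$ there, the pullback condition (ii) defining morphisms of $\DF$ forces each component $\eta^{\psi}_{i}$ to factor through the fibre inclusion $f_{j}(\phi(i)) \hookrightarrow f(\phi(i))$, and this factorization is unique because the fibre inclusions are monomorphisms, yielding the unique map $(Y, \psi) \to (([n], f_{j}), \iota_{j})$. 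As the limit over a disjoint union is the product of the limits over the summands, and the limit over each summand is its value at the terminal object $\iota_{j}$, we obtain $(i_{*}\mathcal{G})(X) \simeq \prod_{j \in \mathbf{l}} \mathcal{G}([n], f_{j})$ for every $\mathcal{G} \in \mathcal{P}(\DFi)$.

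With this formula both remaining facts follow. First, the identical root bookkeeping identifies $\DFelX$ with a disjoint union $\coprod_{j \in \mathbf{l}} (\DFelX)_{j}$ in which $(\DFelX)_{j}$ is precisely the elementary slice of the tree $([n], f_{j})$ --- an elementary object mapping inertly into $X$ lies in the summand of the unique root beneath its top edge, and factors inertly through $\iota_{j}$. Thus for Segal $\mathcal{G}$ we get $\lim_{E \in \DFelX}(i_{*}\mathcal{G})(E) \simeq \prod_{j} \lim_{E \in (\DFelX)_{j}} \mathcal{G}(E) \simeq \prod_{j} \mathcal{G}([n], f_{j}) \simeq (i_{*}\mathcal{G})(X)$, the middle equivalence being the Segal condition for $\mathcal{G}$ at each $([n], f_{j})$; so $i_{*}\mathcal{G}$ is a Segal presheaf. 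Second, for Segal $\mathcal{F}$ on $\DF$ the unit at $X$ is the canonical map $\mathcal{F}(X) \to (i_{*}i^{*}\mathcal{F})(X) \simeq \prod_{j} \mathcal{F}([n], f_{j})$ induced by the projections $\iota_{j}^{*}$, which is exactly the Segal equivalence expressing $\mathcal{F}(X)$ as the product of its values on the fibre trees; hence the unit is an equivalence. Both deductions use that $\DFel \subseteq \DFi$, which is what makes the two Segal conditions comparable at all.

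The main obstacle is the terminal-object claim of the second paragraph: that an arbitrary map $iY \to X$ from a tree factors uniquely through the fibre inclusion $\iota_{j}$ over the root it hits. This is the one step genuinely using the combinatorics of $\DF$ rather than formal properties of Kan extensions and localizations --- the defining pullback condition on morphisms of $\DF$ rigidifies how a tree can sit inside a forest, forcing it into a single fibre and supplying the required factorization. Once this is established, everything else is bookkeeping with the root decomposition and the Segal conditions.
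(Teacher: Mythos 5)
Your proof is correct and follows essentially the same route as the paper's: both exhibit the right Kan extension $i_{*}$ as an inverse, use full faithfulness of $i$ for the counit, and reduce the unit to the product formula $(i_{*}\mathcal{G})([n],f) \simeq \prod_{j \in f(n)} \mathcal{G}([n],f_{j})$, where your decomposition of $(\DFi)_{/X}$ into summands each having a terminal object $\iota_{j}$ is exactly the content of the paper's cofinality claim for the inclusion $\{f_{j}\} \hookrightarrow (\DFi)_{/X}$. The only cosmetic difference is that you verify directly from this formula that $i_{*}$ preserves Segal presheaves, whereas the paper observes abstractly that $i^{*}$ preserves Segal equivalences and hence its right adjoint preserves local objects.
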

\begin{proof}
  We will show that the right Kan extension functor $i_{*} \colon
  \mathcal{P}(\DFi) \to \mathcal{P}(\DF)$, which is right adjoint to
  $i^{*} \colon \mathcal{P}(\DF) \to \mathcal{P}(\DFi)$, restricts to
  an inverse to $i^{*}$ on Segal presheaves. First of all, as $i^{*}$
  preserves colimits, it is easy to see that it sends Segal
  equivalences to Segal equivalences; it follows that $i_*$ preserves
  the property of being a Segal presheaf. To see that $i_*$ indeed
  gives the desired inverse, we will show that the natural
  transformations $\id_{\mathcal{P}(\DF)} \to i_{*}i^{*}$ and
  $i^{*}i_{*} \to \id_{\mathcal{P}(\DFi)}$ are equivalences on Segal
  presheaves.

  Since $i \colon \DFi \hookrightarrow \DF$ is the inclusion of a full
  subcategory, the functor $i_{*}$ is fully faithful, and so
  $i^{*}i_{*} \to \id_{\mathcal{P}(\DFi)}$ is an equivalence for any
  presheaf on $\DFi$.

  If $\mathcal{F}$ is a presheaf on $\DF$, then the component of $\mathcal{F} \to i_{*}i^{*}\mathcal{F}$ at $([n],
  f) \in \DF$ is the natural map $\mathcal{F}([n], f) \to
  \lim_{((\DFi)_{/([n], f)})^{\op}} \mathcal{F}$. If $f(n) = 
  \mathbf{k}$, let $([n], f_{i})$ for $i = 1,\ldots,k$ denote the
  subtree
  \[ f(0)_{i} \to f(1)_{i} \to \cdots \to f(n-1)_{i} \to \{i\}\] given
  by the fibres $f(j)_{i}$ of $f^{jn} \colon f(j) \to f(n) =
  \mathbf{k}$ at $i \in \mathbf{k}$. To understand the limit
  $i_{*}i^{*}\mathcal{F}([n], f)$ we will prove that the inclusion
  $\{f_{i} : i = 1,\ldots,k\} \hookrightarrow (\DFi)_{/([n], f)}$ is
  cofinal. By Joyal's version of Quillen's Theorem A \cite[Theorem 4.1.3.1]{HTT} it suffices to show that for
  every $(\phi, \eta) \colon ([m], g) \to ([n], f)$ in $(\DFi)_{/([n],
    f)}$, the category $\{f_{i} : i = 1,\ldots,k\}_{(\phi,\eta)/}$ is
  weakly contractible. But it is clear that this is the one-object set
  $\{f_{j}\}$, where $j = f^{\phi(m)n}(\eta_{m}(1))$, which is
  certainly weakly contractible.

  It follows that $(i_{*}i^{*}\mathcal{F})([n], f) \simeq \prod_{i \in
    f(n)} \mathcal{F}([n], f_{i})$, and the map $\mathcal{F}([n], f)
  \to (i_{*}i^{*}\mathcal{F})([n], f)$ is the natural map
  $\mathcal{F}([n], f) \to \prod_{i \in f(n)} \mathcal{F}([n],
  f_{i})$. But if $\mathcal{F}$ is a Segal presheaf then this map is
  an equivalence (since $(\DF^{\txt{el}})_{/([n], f)}$ is the
  coproduct of $(\DF^{\txt{el}})_{/([n], f_{i})}$ over $i \in f(n)$).
\end{proof}

\section{Segal Presheaves on \texorpdfstring{$\bbO$}{O}}\label{sec:SegPshDendr}
We now recall the definition of the dendroidal category $\bbO$. It
was originally defined by Moerdijk and Weiss~\cite{MoerdijkWeiss} as a
category of trees, with the morphisms given by the operad maps between
the free operads generated by these trees. A combinatorial reformulation of this definition was later given by
Kock~\cite{KockTree}, and it is his definition that we will recall
here.

\begin{defn}
  A \emph{polynomial endofunctor} is a diagram of sets
  \[ X_{0} \xfrom{s} X_{2} \xto{p} X_{1} \xto{t} X_{0}.\]
  A polynomial endofunctor is a \emph{tree} if:
  \begin{enumerate}[(1)]
  \item The sets $X_{i}$ are all finite.
  \item The function $t$ is injective.
  \item The function $s$ is injective, with a unique element $R$ (the
    \emph{root}) in the complement of its image.
  \item Define a successor function $\sigma \colon X_{0} \to X_{0}$ as follows. First, set
    $\sigma(R) = R$. For $e \in s(X_{2})$ (which is the complement of $R$ in $X_{0}$), take $e'$ in $X_{2}$ with $s(e') = e$ and set $\sigma(e) = t(p(e'))$. Then for every $e$ there exists some $k$ such that
    $\sigma^{k}(e) = R$.
  \end{enumerate}
\end{defn}

\begin{remark}
  The intuition behind this notion of ``tree'' is as follows: we think
  of $X_{0}$ as the set of edges of the tree, $X_{1}$ as the set of
  vertices (our trees do not have vertices at their leaves or root),
  and $X_{2}$ as the set of pairs $(v, e)$ where $v$ is a vertex and
  $e$ is an incoming edge of $v$. The function $s$ is the projection
  $s(v,e) = e$, the function $p$ is the projection $p(v,e) = v$, and
  the function $t$ assigns to each vertex its unique outgoing edge.
\end{remark}

\begin{remark}
  The name ``polynomial endofunctor'' comes from the fact that such a
  diagram induces an endofunctor of $\Set_{/X_{0}}$ given by
  $t_{!}p_{*}s^{*}$. We refer the reader to \cite{KockTree} for more
  discussion of this.
\end{remark}

\begin{defn}
  A morphism of polynomial endofunctors $f \colon X \to Y$ is a
  commutative diagram
\[
\begin{tikzcd}
  X_{0} \arrow{d}{f_{0}}& X_{2} \arrow{l} \arrow{r} \arrow{d}{f_{2}}& X_{1} \arrow{r}\arrow{d}{f_{1}} & X_{0} \arrow{d}{f_{0}}\\
Y_{0} & Y_{2} \arrow{l} \arrow{r} & Y_{1} \arrow{r} & Y_{0}
\end{tikzcd}
\]
such that the middle square is Cartesian. We write $\bbOint$
for the category of trees and  morphisms of polynomial endofunctors
between them; we will refer to these as the \emph{inert} morphisms
between trees, or as \emph{embeddings} of subtrees.
\end{defn}
\begin{remark}
  By \cite[Proposition 1.1.3]{KockTree} every morphism of polynomial
  endofunctors between trees is injective, which justifies calling
  these morphisms embeddings.
\end{remark}

The following two definitions fix some terminology which we will need later.

\begin{defn}
Let $X$ be a tree. Then a \emph{leaf} of $X$ is an element of $X_0$
which is not in the image of $t: X_1 \rightarrow X_0$.
\end{defn}

\begin{defn}
  We write $C_{n}$ for the \emph{$n$-corolla}, namely the tree
  \[ \{0,1,\ldots,n\} \hookleftarrow \{1,\ldots,n\} \to \{0\}
  \hookrightarrow \{0,1,\ldots,n\}.\] We write $\eta$ for the
  \emph{edge}, namely the trivial tree
  \[ * \hookleftarrow \emptyset \to \emptyset
  \hookrightarrow *.\]
\end{defn}

\begin{defn}
  If $T$ is a tree, let $\txt{sub}(T)$ be the set of subtrees of $T$,
  i.e. the set of morphisms $T' \to T$ in $\bbOint$, and let
  $\txt{sub}'(T)$ be the set of subtrees of $T$ with a marked leaf,
  i.e. the set of pairs of morphisms $(\eta \to T', T' \to T)$, where the image of the first map is a leaf of $T'$. We then
  write $\overline{T}$ for the polynomial endofunctor
  \[ T_{0} \from \txt{sub}'(T) \to \txt{sub}(T) \to T_{0},\]
  where the first map sends a marked subtree to its marked edge, the
  second is the obvious projection, and the third sends a subtree to
  its root.
\end{defn}

\begin{defn}
  The category $\bbO$ has objects trees, and has as morphisms $T \to
  T'$ the morphisms of polynomial endofunctors $\overline{T} \to
  \overline{T}'$.
\end{defn}

\begin{remark}
  By \cite[Corollary 1.2.10]{KockTree}, the polynomial endofunctor
  $\overline{T}$ is in fact the free polynomial monad generated by
  $T$, and the category $\bbO$ is a full subcategory of the Kleisli
  category of the monad assigning the free polynomial monad to a polynomial endofunctor. This means that a
  morphism $\overline{T} \to \overline{T}'$ is uniquely determined by
  the composite $T \to \overline{T} \to \overline{T}'$. In fact, more
  is true:
\end{remark}
\begin{lemma}[\cite{KockTree}*{Lemma 1.3.5}]\label{lem:moredge}
  Any morphism $\overline{T} \to \overline{T}'$ in $\bbO$ is uniquely
  determined by the underlying map $T_{0} \to T'_{0}$ on sets of edges.
\end{lemma}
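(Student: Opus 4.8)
The plan is to unwind the definition of a morphism of polynomial endofunctors and show that the map on edges forces the values on subtrees and on marked subtrees. Write such a morphism $\overline{T} \to \overline{T}'$ as a triple $(f_{0}, f_{1}, f_{2})$, where $f_{0} \colon T_{0} \to T'_{0}$ is the map on edges, $f_{1} \colon \txt{sub}(T) \to \txt{sub}(T')$ is the map on subtrees, and $f_{2} \colon \txt{sub}'(T) \to \txt{sub}'(T')$ is the map on subtrees with a marked leaf; these are subject to the commutativity of the three structure squares together with the requirement that the middle square (over the projections forgetting the marked leaf) is Cartesian. First I would show that $f_{0}$ determines $f_{1}$, and then that $f_{0}$ and $f_{1}$ together determine $f_{2}$. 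Since $f_{0}$, $f_{1}$, $f_{2}$ are all the data of the morphism, this proves the claim.

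To see that $f_{0}$ determines $f_{1}$, fix a subtree $S \in \txt{sub}(T)$ and analyse the root and the leaves of the subtree $f_{1}(S) \in \txt{sub}(T')$. Commutativity of the square over the structure map sending a subtree to its root gives that the root edge of $f_{1}(S)$ is $f_{0}$ of the root edge of $S$. For the leaves, I would use that the middle square is Cartesian: restricted over $S$, the projection $\txt{sub}'(T) \to \txt{sub}(T)$ has fibre the set of leaves of $S$, and the Cartesian condition identifies this fibre bijectively, via $f_{2}$, with the fibre over $f_{1}(S)$, namely the set of leaves of $f_{1}(S)$. Combined with commutativity of the square sending a marked subtree to its marked edge, this shows that the set of leaf edges of $f_{1}(S)$ is exactly the image under $f_{0}$ of the set of leaf edges of $S$.

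The key geometric input is then that a subtree of a tree is uniquely determined by its root edge together with its set of leaf edges. I would prove this by reconstructing the subtree by upward induction from the root: at an edge $e$ of the subtree that is not one of the prescribed leaves, $e$ is not a leaf of the subtree, so it is the outgoing edge of some vertex $v$ of the subtree; since the function $t$ of $T'$ is injective (condition (2) in the definition of a tree) there is at most one vertex of $T'$ with outgoing edge $e$, so $v$, and hence all of its incoming edges, is forced to be included. Thus the root and leaf data leave no freedom, and $f_{1}(S)$ is determined by $f_{0}$. Applying this to every $S$ shows that $f_{1}$ is determined by $f_{0}$.

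Finally, $f_{2}$ is forced by $f_{0}$ and $f_{1}$: for a marked subtree $(S, \ell) \in \txt{sub}'(T)$, commutativity of the middle square shows that the underlying subtree of $f_{2}(S, \ell)$ is $f_{1}(S)$, while commutativity of the square to the marked edge shows that its marked edge is $f_{0}(\ell)$, which is a leaf of $f_{1}(S)$ by the previous paragraph; hence $f_{2}(S, \ell)$ is the unique marked subtree of $T'$ with these two properties. I expect the main obstacle to be the geometric lemma on subtrees, and in particular making the leaf-counting argument precise in the presence of nullary vertices (stumps), where one must be careful that it is the Cartesian condition --- rather than a naive matching of leaf edges --- that pins down the image subtree.
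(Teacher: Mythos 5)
Your proof is correct. There is, however, nothing in the paper to compare it against: the lemma is imported wholesale from Kock \cite{KockTree}*{Lemma 1.3.5}, and the paper supplies no argument of its own, so what you have done is reconstruct the proof from the cited source --- and your reconstruction matches its essential ingredients: a morphism is the triple $(f_{0},f_{1},f_{2})$, the outer squares transport roots and marked edges, Cartesianness of the middle square transports leaf sets, and a subtree is uniquely determined by its root edge together with its set of leaf edges. Your identification of where the Cartesian condition is needed is exactly right: commutativity alone only yields that $f_{0}$ carries leaves of $S$ \emph{into} the leaves of $f_{1}(S)$, which does not determine $f_{1}(S)$ --- for instance, if $S$ is the stump corolla $C_{0}$, commutativity constrains nothing but the root of $f_{1}(S)$, while Cartesianness forces $f_{1}(S)$ to be the unique subtree with that root and empty leaf set. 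One detail worth spelling out when you write up the geometric lemma: your upward induction shows that any subtree with root $r$ and leaf set $L$ must \emph{contain} everything generated from $r$ by repeatedly passing from a non-leaf edge to the unique ambient vertex having it as outgoing edge and adjoining that vertex's incoming edges; for uniqueness you also need the reverse inclusion, i.e. that the subtree contains nothing more. This is where condition (4) in the definition of a tree (iterating the successor function reaches the root) enters: applied to the subtree, it shows every edge of the subtree is reached from $r$ by this generation process. With that in place, the argument is complete.
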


\begin{defn}
  It follows that $\bbOint$ is a subcategory of $\bbO$; we say a
  morphism in $\bbO$ is \emph{inert} if it lies in the image of
  $\bbOint$.  We also say a morphism $\phi \colon T \to T'$ in $\bbO$
  is \emph{active} if it takes the maximal subtree to the maximal
  subtree, or equivalently if it takes the leaves of $T$ to the leaves
  of $T'$ (bijectively) and the root of $T$ to the root of $T'$.
\end{defn}

\begin{remark}
  In \cite{KockTree} the inert morphisms are called \emph{free}, and
  the active ones \emph{boundary-preserving}. Our terminology follows 
  that of Barwick~\cite{BarwickOpCat} and Lurie~\cite{HA}. 
\end{remark}

\begin{propn}[Kock, {\cite[Proposition 1.3.13]{KockTree}}]
  The active and inert morphisms form a factorization system on $\bbO$.\qed
\end{propn}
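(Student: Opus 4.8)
The plan is to exhibit the active maps as the left class and the inert maps as the right class of an orthogonal factorization system, in analogy with the system on $\simp$ (where every $\phi \colon [n] \to [m]$ factors as the endpoint-preserving surjection onto its image followed by the inclusion of that image as a subinterval). Concretely, it suffices to verify two things: that every morphism of $\bbO$ factors as an active map followed by an inert map, and that active maps are left orthogonal to inert maps, i.e.\ every commutative square with an active map on the left and an inert map on the right admits a unique diagonal filler. The remaining axioms --- that each class contains the isomorphisms and is closed under composition --- are immediate, since an isomorphism of trees preserves roots and leaves and lies in $\bbOint$, composites of active maps are active, and $\bbOint$ is a subcategory of $\bbO$.

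For the factorization, I would start from a morphism $\phi \colon T \to T'$ and let $S \subseteq T'$ be the subtree spanned by the image of the edge map $\phi_{0} \colon T_{0} \to T'_{0}$, i.e.\ the subtree whose root is the image of the root of $T$ and whose leaves are the images of the leaves of $T$. The inclusion $S \hookrightarrow T'$ is a subtree embedding and hence inert. Using the description of $\bbO$ as a full subcategory of the Kleisli category of the free-polynomial-monad construction, the edge map $\phi_{0}$ corestricts to an edge map $T_{0} \to S_{0}$ that underlies a morphism $a \colon T \to S$, which is unique by Lemma~\ref{lem:moredge}, and satisfies $\phi = (S \hookrightarrow T') \circ a$. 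It then remains to see that $a$ is active: it sends the root of $T$ to the root of $S$ by construction, and sends the leaves of $T$ onto the leaves of $S$; the crucial point is that the leaf map is a \emph{bijection}, which holds because $\phi$ preserves the arity of the whole-tree operation of $T$ and hence cannot identify distinct leaves.

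For orthogonality, consider active $\alpha \colon A \to B$, inert $\epsilon \colon X \to Y$, and a commutative square with horizontal maps $f \colon A \to X$ and $g \colon B \to Y$ satisfying $\epsilon \circ f = g \circ \alpha$. Since $\epsilon$ is a subtree embedding, its edge map $\epsilon_{0}$ is injective, so a diagonal filler is forced on edges to be $\epsilon_{0}^{-1} \circ g_{0}$, and by Lemma~\ref{lem:moredge} this already yields uniqueness. For existence I would check that $g_{0}$ factors through $X_{0} \subseteq Y_{0}$: because $\alpha$ is active, the root of $B$ and every leaf of $B$ are images under $\alpha$ of the root and leaves of $A$, so their $g$-images coincide with $\epsilon$-images of edges of $X$ and thus lie in $X_{0}$; since the image of $g$ is the subtree of $Y$ spanned by these root and leaf edges, and a subtree is convex in the edge poset of $Y$ while $X$ is a subtree containing all of these edges, the whole image $g_{0}(B_{0})$ lies in $X_{0}$. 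The corestricted edge map then underlies a morphism $B \to X$ by Lemma~\ref{lem:moredge}, completing the lifting.

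The main obstacle is the combinatorial input specific to trees, which is where the real content of Kock's result lies: that the corestriction $a$ is \emph{bijective}, not merely surjective, on leaves; that an edge map whose image lands in a subtree is induced by a morphism into that subtree; and that subtrees are convex with respect to the edge poset. All three follow from the description of $\bbO$ as a full subcategory of the Kleisli category of the free-polynomial-monad construction, together with Lemma~\ref{lem:moredge}, but verifying them carefully is the substantive part of the argument, whereas the factorization-system bookkeeping above is formal.
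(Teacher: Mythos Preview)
The paper does not give a proof of this proposition: it is stated with attribution to Kock and marked \qed, so there is nothing here to compare your argument against. Your sketch is a reasonable outline of how such a factorization system is established, and the ingredients you identify (the image subtree, Lemma~\ref{lem:moredge}, injectivity of inert maps on edges, convexity of subtrees in the edge poset) are indeed the ones Kock uses; if you want to verify your argument in detail you should consult \cite[\S1.3]{KockTree} directly.
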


\begin{defn}
  Let $\bbO^{\txt{el}}$ be the full subcategory of $\bbOint$ spanned
  by the objects $C_{n}$ ($n = 0,1,\ldots$) and
  $\eta$. We say a presheaf $\mathcal{F} \colon \bbO^{\op} \to
  \mathcal{S}$ is a \emph{Segal presheaf} if the restriction
  $\mathcal{F}|_{\bbOint^{\op}}$ is a right Kan extension of its
  restriction to $\bbO^{\txt{el},\op}$. We write $\PSegO$ for the full
  subcategory of $\mathcal{P}(\bbO)$ spanned by the Segal presheaves.
\end{defn}

\begin{remark}
  For $T \in \bbO$, write $\bbOelT$ for the category $\bbO^{\txt{el}}
  \times_{\bbOint} (\bbOint)_{/X}$; then a presheaf $\mathcal{F} \in
  \mathcal{P}(\bbO)$ is a Segal presheaf \IFF{} the map
  $\mathcal{F}(T) \to \lim_{E \in \bbOelT} \mathcal{F}(E)$ is an
  equivalence for every $T \in \bbO$. If we let $T_{\Seg}$ denote the
  presheaf $\colim_{E \in \bbOelT} E$ (where we regard $E$ as a
  presheaf via the Yoneda embedding), then this means that
  $\mathcal{F}$ is a Segal presheaf \IFF{} it is local with respect to
  the maps $T_{\Seg} \to T$ for $T \in \bbO$. Thus $\PSegO$ is the
  localization of $\mathcal{P}(\bbO)$ with respect to these maps ---
  in particular, it is an accessible localization of
  $\mathcal{P}(\bbO)$; we write $L_{\bbO} \colon \mathcal{P}(\bbO) \to
  \PSegO$ for the localization functor. We call the local equivalences
  for this localization the \emph{Segal equivalences} in
  $\mathcal{P}(\bbO)$.
\end{remark}

\begin{remark}
  The $\infty$-category $\PSegO$ corresponds to the model category of
  \emph{dendroidal Segal spaces} studied by Cisinski and Moerdijk
  \cite{CisinskiMoerdijkDendSeg}.
\end{remark}

\section{From \texorpdfstring{$\DFi$}{DFi} to \texorpdfstring{$\bbO$}{O}}\label{sec:DFitobbO}
In this section we will define a functor $\tau \colon \DFi \to \bbO$.
On objects, the functor $\tau$ takes an object $([n], f)$ in $\DFi$ to
the diagram
\[ \coprod_{i = 0}^{n} f(i) \xfrom{s} \coprod_{i = 0}^{n-1} f(i)
\xto{p} \coprod_{i = 1}^{n} f(i) \xto{t} \coprod_{i = 0}^{n} f(i),\]
where $s$ and $t$ are the obvious inclusions and $p$ takes $x \in
f(i)$ to $f^{i+1}(x) \in f(i+1)$.

If $(\phi, \eta) \colon ([n], f) \to ([m], g)$ is an inert map in
$\DFi$, then we define $\tau(\phi, \eta)$ to be the obvious morphism 
\[
\begin{tikzcd}
  \coprod_{i = 0}^{n} f(i) \arrow{d}& \coprod_{i = 0}^{n-1} f(i)
  \arrow{l} \arrow{r} \arrow{d}& \coprod_{i = 1}^{n} f(i)
  \arrow{r}\arrow{d} & \coprod_{i = 0}^{n} f(i) \arrow{d}\\
\coprod_{j = 0}^{m} g(j) & \coprod_{j = 0}^{m-1} g(j) \arrow{l} \arrow{r} & \coprod_{j = 1}^{m} g(j) \arrow{r} &\coprod_{j = 0}^{m} g(j).
\end{tikzcd}
\]
Here the middle square is Cartesian, as required, since by definition $\eta$ is a
Cartesian natural transformation.

To define $\tau$ for a general map in $\DFi$, it is convenient to first
introduce an intermediate object $\tilde \tau([n],f)$ between $\tau([n], f)$ and its free
monad $\overline{\tau}([n], f)$:
\begin{defn}
  For $([n], f) \in \DFi$, let $\txt{sub}_{\DFi}([n], f)$ denote the set
  of subtrees of $([n], f)$ given by maps in $\DFi$, i.e. the set of
  inert maps $([m], g) \hookrightarrow ([n], f)$ in $\DFi$, or equivalently the set of pairs $(x \in f(i), 0 \leq j
  \leq i)$, corresponding to the subtree
  \[ f(j)_{x} \to f(j+1)_{x} \to \cdots \to f(i-1)_{x} \to \{x\},\]
  where $f(k)_{x}$ is the fibre of $f^{ki} \colon f(k) \to f(i)$ at
  $x$. Similarly, let $\txt{sub}'_{\DFi}([n], f)$ be the set of
  subtrees in $\txt{sub}_{\DFi}([n], f)$ with a marked leaf, or
  equivalently the set of triples $(x \in f(i), 0 \leq j \leq i, y \in
  f(j)_{x})$. We then let $\widetilde{\tau}([n], f)$ denote the polynomial endofunctor
  \[ \coprod_{i = 0}^{n} f(i) \from \txt{sub}'_{\DFi}([n], f) \to
  \txt{sub}_{\DFi}([n], f) \to \coprod_{i = 0}^{n} f(i),\] where the
  first map takes $(x \in f(i), j, y \in f(j)_{x})$ to the marked leaf
  $y$ and the second projects it to $(x\in f(i),j)$, and the third takes the
  subtree $(x \in f(i), j)$ to its root $x$. The definition of $\tau$
  on inert maps clearly gives an injective map $\widetilde{\tau}([n],
  f) \hookrightarrow \overline{\tau}([n], f)$ of polynomial
  endofunctors, and the canonical map $\tau([n], f) \to
  \overline{\tau}([n], f)$ factors through this.
\end{defn}

For a general map $(\phi, \eta) \colon ([n], f) \to ([m], g)$, we
then define a map of polynomial endofunctors $\tau([n], f) \to
\widetilde{\tau}([m], g)$, i.e.
\[
\begin{tikzcd}
\coprod_{i = 0}^{n} f(i) \arrow{d}& \coprod_{i = 0}^{n-1} f(i)
 \arrow{l} \arrow{r} \arrow{d}& \coprod_{i = 1}^{n} f(i)
 \arrow{r}\arrow{d} & \coprod_{i = 0}^{n} f(i) \arrow{d}\\
\coprod_{j = 0}^{m} g(j) & \txt{sub}'_{\DFi}([m], g)  \arrow{l}
\arrow{r} & \txt{sub}_{\DFi}([m], g) \arrow{r} &\coprod_{j = 0}^{m} g(j),
\end{tikzcd}
\]
as follows:
\begin{itemize}
\item The component $\coprod_{i = 0}^{n} f(i) \to \coprod_{j = 0}^{m} g(j)$
is the obvious map, given on $f(i)$ by $\eta_{i} \colon f(i) \to
g(\phi(i))$.
\item  The component $\coprod_{i = 1}^{n} f(i) \to
\txt{sub}_{\DFi}([m], g)$ is given by 
\[ (x \in f(i)) \mapsto (\eta_{i}(x) \in
g(\phi(i)), \phi(i-1)).\]
\item The component $\coprod_{i = 0}^{n-1} f(i) \to
\txt{sub}'_{\DFi}([m], g)$ is defined by
\[ (x \in f(i)) \mapsto
(\eta_{i+1}(f^{i+1}(x)) \in g(\phi(i+1)), \phi(i), \eta_{i}(x) \in g(\phi(i))_{\eta_{i+1}(f^{i+1}(x))})\]
\end{itemize}
We see that the middle square in the diagram above is then Cartesian,
since $\eta$ is a Cartesian natural transformation, so this does
indeed define a map of polynomial endofunctors. We then define
$\tau(\phi,\eta)$ to be the map $\overline{\tau}([n], f) \to
\overline{\tau}([m], g)$ induced by the composite $\tau([n], f) \to
\widetilde{\tau}([m], g) \hookrightarrow \overline{\tau}([m], g)$. 

\begin{lemma}
  $\tau$ is a functor $\DFi \to \bbO$.
\end{lemma}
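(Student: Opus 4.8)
The plan is to reduce everything to the effect on edges, using Lemma~\ref{lem:moredge}. Sending a tree $T$ to its edge-set $T_{0}$ and a morphism of $\bbO$ to the induced map on $X_{0}$ defines a functor $U \colon \bbO \to \Set$: it is functorial because the edge map is the $f_{0}$-component of the underlying morphism of polynomial endofunctors, and composition in $\bbO$ is computed componentwise on these. Lemma~\ref{lem:moredge} is precisely the statement that $U$ is \emph{faithful}. Since the $\overline{(-)}$-construction leaves the edge-set unchanged, $\overline{\tau}([n],f)$ has edge-set $\coprod_{i=0}^{n} f(i)$. It therefore suffices to prove that the assignment $(\phi,\eta) \mapsto \tau(\phi,\eta)$ becomes a functor after applying $U$; faithfulness of $U$ then upgrades the resulting identities between edge maps to the corresponding identities in $\bbO$.

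First I would identify the underlying edge map $U\tau(\phi,\eta) \colon \coprod_{i} f(i) \to \coprod_{j} g(j)$. By the Remark preceding Lemma~\ref{lem:moredge}, $\tau(\phi,\eta)$ is the unique morphism of free monads whose precomposite with the unit $\tau([n],f) \to \overline{\tau}([n],f)$ equals the chosen map $\tau([n],f) \to \widetilde{\tau}([m],g) \hookrightarrow \overline{\tau}([m],g)$. On edge-sets both the unit and the inclusion $\widetilde{\tau}([m],g) \hookrightarrow \overline{\tau}([m],g)$ are identities, so $U\tau(\phi,\eta)$ equals the leftmost (equivalently rightmost) component of $\tau([n],f) \to \widetilde{\tau}([m],g)$, which by the definition of that map is given on the summand $f(i)$ by $\eta_{i} \colon f(i) \to g(\phi(i))$.

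Granting this, the verification is immediate. For an identity $(\id_{[n]}, \id_{f})$ the edge map is $\coprod_{i} \id_{f(i)}$, so $U\tau(\id) = \id$ and hence $\tau(\id) = \id$. For composable maps $(\phi,\eta) \colon ([n],f) \to ([m],g)$ and $(\psi,\zeta) \colon ([m],g) \to ([p],h)$, the composite in $\DFi$ is $(\psi\phi, \theta)$ with natural-transformation component $\theta_{i} = \zeta_{\phi(i)} \circ \eta_{i} \colon f(i) \to h(\psi\phi(i))$ at $i$. Hence $U\tau$ of the composite restricts on $f(i)$ to $\zeta_{\phi(i)} \circ \eta_{i}$, which is exactly $\bigl(U\tau(\psi,\zeta) \circ U\tau(\phi,\eta)\bigr)|_{f(i)}$. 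Thus $U\tau$ preserves composition, and faithfulness of $U$ gives $\tau(\psi,\zeta) \circ \tau(\phi,\eta) = \tau\bigl((\psi,\zeta) \circ (\phi,\eta)\bigr)$.

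The one step that genuinely needs care is the identification of the edge component in the second paragraph: one must confirm that passing from the map of polynomial endofunctors $\tau([n],f) \to \widetilde{\tau}([m],g)$ to the induced map of free monads $\overline{\tau}([n],f) \to \overline{\tau}([m],g)$ does not disturb the component on $X_{0}$. This is where Lemma~\ref{lem:moredge} and the Kleisli description of $\bbO$ carry the argument; the well-definedness of the underlying map of polynomial endofunctors (Cartesianness of the middle square) has already been observed above, so no further combinatorial input is needed.
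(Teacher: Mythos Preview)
Your proof is correct and follows essentially the same approach as the paper: both reduce to the underlying edge maps via Lemma~\ref{lem:moredge}, identify $U\tau(\phi,\eta)$ on the summand $f(i)$ as $\eta_{i}$, and verify functoriality there. You are more explicit than the paper about why passing to free monads leaves the edge component unchanged, but the core argument is identical.
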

\begin{proof}
  Since $\tau$ clearly preserves identities, it remains to check that
  it respects composition, i.e. that for \[([n], f) \xto{(\phi, \eta)}
  ([m], g) \xto{(\psi, \lambda)} ([k], h)\] in $\DFi$ the maps
  $\tau((\psi,\lambda) \circ (\phi, \eta))$ and $\tau(\psi, \lambda)
  \circ \tau(\phi, \eta)$ agree. But by Lemma~\ref{lem:moredge} it
  suffices to show that they are given by the same map on the set of
  edges. By definition, for $\tau(\phi, \eta)$ this is the map
  $\coprod_{i = 0}^{n} f(i) \to \coprod_{j = 0}^{m} g(j)$ given on
  $f(i)$ by $\eta_{i} \colon f(i) \to g(\phi(i))$, so it is evident
  that the two maps agree on the edge sets.
\end{proof}

The definition of $\tau$ immediately implies the following lemma:
\begin{lemma}
  The functor $\tau$ preserves the surjective-injective and
  active-inert factorization systems. \qed
\end{lemma}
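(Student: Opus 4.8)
The plan is to exploit Lemma~\ref{lem:moredge}, which reduces every morphism in $\bbO$ to its underlying map on edge sets, so that preservation of each factorization system can be checked entirely at the level of edges. Recall from the construction that the edge map underlying $\tau(\phi, \eta)$ is the map $\coprod_{i=0}^{n} f(i) \to \coprod_{j=0}^{m} g(j)$ whose restriction to the summand $f(i)$ is $\eta_i \colon f(i) \to g(\phi(i))$. Since a functor preserves a factorization system as soon as it preserves each of the two classes of maps (because then $\tau(r)\circ\tau(l)$ is the factorization of $\tau(f)$), it suffices to verify the four relevant inclusions separately.

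For the active-inert system I would argue as follows. The inert case is immediate from the construction: when $(\phi, \eta)$ is inert, $\tau(\phi, \eta)$ was defined directly as a morphism of polynomial endofunctors (the first commutative square of this section), hence lands in $\bbOint$ and is inert by definition. For the active case, I first identify the leaves and root of $\tau([n], f)$: the leaves are the summand $f(0)$ (the complement of the image of $t$) and the root is the single element of $f(n) = \mathbf{1}$ (the complement of the image of $s$). If $(\phi, \eta)$ is active, then $\phi(0) = 0$, $\phi(n) = m$, and every $\eta_i$ is an isomorphism; the edge map therefore restricts to the bijection $\eta_0 \colon f(0) \xrightarrow{\sim} g(0)$ between leaves and sends the root $f(n)$ to the root $g(m)$ via the isomorphism $\eta_n$. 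This is precisely the condition for $\tau(\phi, \eta)$ to be active.

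For the surjective-injective system, where the injective (resp.\ surjective) maps in $\bbO$ are taken to be those whose edge map is injective (resp.\ surjective) --- a well-defined notion by Lemma~\ref{lem:moredge} --- both checks are again read off from the edge map. If $\phi$ is injective then the summands $g(\phi(i))$ are pairwise disjoint and each $\eta_i$ is injective by condition (i) in the definition of $\DF$, so the total edge map is injective. If $(\phi, \eta)$ is surjective then $\phi$ is surjective and each $\eta_i$ is an isomorphism, so every summand $g(j) = g(\phi(i))$ is hit surjectively by $\eta_i$ and the edge map is surjective.

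None of these steps presents a genuine obstacle; the only point requiring a moment's care is correctly pinning down the leaves and root of $\tau([n], f)$ and matching them against the leaf/root characterization of active maps in $\bbO$. Once the edge-map formula for $\tau(\phi, \eta)$ is in hand, all four verifications are immediate, which is why the statement can be flagged as a direct consequence of the definition.
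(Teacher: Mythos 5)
Your proof is correct and matches the paper's approach: the paper states this lemma with no proof at all, presenting it as an immediate consequence of the definition of $\tau$, and your edge-map verification (via Lemma~\ref{lem:moredge} and the explicit formula for the edge map of $\tau(\phi,\eta)$) is exactly the routine check being elided. The only point you leave implicit --- as does the paper, which never spells out the surjective-injective factorization system on $\bbO$ --- is that the edge-surjective and edge-injective maps do form a factorization system on $\bbO$ (they coincide with the standard degeneracy/face classes of Moerdijk--Weiss and Kock); granting that, all four class-preservation checks are accurate.
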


\begin{lemma}\label{lem:taueleq}
  The functor $\tau$ restricts to an equivalence $\DFel \to
  \bbOel$. Moreover, for any $X \in \DFi$, it induces an 
  equivalence of categories $\DFelX \to \bbOel_{/\tau(X)}$. \qed
\end{lemma}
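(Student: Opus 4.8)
The claim in both cases is really that $\tau$ is an \emph{isomorphism} of categories, and that is what I would establish. First I would compute $\tau$ on objects of $\DFel$: unwinding the definition, $([0],\mathbf 1)$ is sent to $\ast \hookleftarrow \emptyset \to \emptyset \hookrightarrow \ast$, i.e. to the edge $\eta$, and $([1], \mathbf k \to \mathbf 1)$ is sent to $\{0,\dots,k\} \hookleftarrow \{1,\dots,k\} \to \{0\} \hookrightarrow \{0,\dots,k\}$, i.e. to the corolla $C_k$, so $\tau$ is a bijection on objects. I would then enumerate the inert maps between elementary objects. There are no inert maps $([1],\mathbf k \to \mathbf 1) \to ([0],\mathbf 1)$, since there is no inert $\phi$ in $\simp$ from $[1]$ to $[0]$; the maps $([0],\mathbf 1) \to ([1],\mathbf k \to \mathbf 1)$ amount to a choice of $\phi(0) \in \{0,1\}$ together with an element of $\mathbf k$ or $\mathbf 1$ accordingly, recovering exactly the $k$ leaves and the single root of $C_k$; and the pullback condition forces the endomorphisms of $([1],\mathbf k \to \mathbf 1)$ to be the permutations $\Sigma_k = \operatorname{Aut}(C_k)$. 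Comparing with the morphisms in $\bbOel$ and tracing through the explicit formula for $\tau$ on inert maps, one checks that these bijections are realised by $\tau$, so $\tau\colon \DFel \to \bbOel$ is an isomorphism of categories.

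For the slice statement I would describe both sides as the ``incidence category'' of the underlying tree. For $X = ([n],f)$ with $f(n)=\mathbf 1$, an inert map $([0],\mathbf 1) \to X$ is the same as a pair $(i, x\in f(i))$, i.e. an edge of $\tau(X)$, whose edge set is $\coprod_i f(i)$; and an inert map $([1],\mathbf k \to \mathbf 1) \to X$ amounts, by the pullback condition, to a vertex $x \in f(i)$ for some $i \geq 1$ together with a bijection of $\mathbf k$ with the fibre $f(i-1)_x$ of $f(i-1) \to f(i)$ over $x$. On the dendroidal side, an embedding of a corolla into $\tau(X)$ is forced by the Cartesian condition on its middle square to consist of a single vertex together with all of its adjacent edges, hence is precisely such a pair (a vertex with a labelling of its inputs), while the embeddings of $\eta$ are exactly the edges. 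Thus $\tau$ is again a bijection on objects of the two slice categories.

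Finally, both $\DFelX$ and $\bbOel_{/\tau(X)}$ are thin: every hom-set is empty or a single point. Indeed, a morphism from an edge to a corolla over $X$ (resp. $\tau(X)$) exists precisely when the edge is the output or an input of the corolla's vertex, and is then unique; and between two elementary objects supported at the same vertex but with different input-labellings there is a unique isomorphism, the labelling killing any further automorphisms. Since $\tau$ is constructed so that the edge--vertex incidence of $X$ agrees with that of $\tau(X)$, the induced functor preserves and reflects this relation, and so is an isomorphism of categories on the slices as well. I expect the main obstacle to be exactly this last bookkeeping step—verifying that a morphism of $\DFel$ lies over $X$ if and only if its $\tau$-image lies over $\tau(X)$. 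Its essential input is Lemma~\ref{lem:moredge}: because an inert map into $\tau(X)$ is detected on edge sets, $\tau$ neither identifies distinct elementary inert maps into $X$ nor creates spurious ones, which is what pins down the correspondence of hom-sets.
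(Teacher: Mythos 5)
Your proposal is correct: the paper states this lemma without proof (it is offered as an immediate consequence of the definition of $\tau$), and your explicit enumeration of the elementary objects, the inert maps between them, the objects of the two slice categories, and the thinness of both slices is precisely the verification the paper leaves to the reader. The only caveat is that $\tau([1],\mathbf{k}\to\mathbf{1})$ has edge set $\mathbf{k}\sqcup\mathbf{1}$ rather than $\{0,\ldots,k\}$, so strictly speaking $\tau$ gives an equivalence onto $\bbOel$ (as the lemma asserts) rather than an isomorphism of categories, but this identification is harmless and does not affect your argument.
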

\begin{proof}
  The first claim follows immediately from the definition of
  $\tau$. Since every object $E\to \tau(X)$ in $\bbO_{\xel/\tau(X)}$
  lies in the image of $\tau$, the induced functor
  $\simp_{\mathbb{F},\xel/X} \to \bbO_{\xel/\tau(X)}$ is essentially
  surjective. We now observe that a morphism in $\bbO_{\xel/\tau(X)}$
  is a commutative diagram
				\begin{equation*}
				\begin{tikzcd}
				E \ar{rr}\ar{rd} & &E'\ar{ld}\\
				&\tau(X),&
				\end{tikzcd}
				\end{equation*}
				where the horizontal map is either the identity map or the image of a unique map of the form $([0],\mathbf{1})\to ([1],f)$ under $\tau$. This shows that the functor $\simp_{\mathbb{F},\xel/X} \to \bbO_{\xel/\tau(X)}$ is also fully faithful. 
			\end{proof}

\begin{lemma}\ 
  \begin{enumerate}[(i)]
  \item The functor $\tau_{!} \colon \mathcal{P}(\DFi) \to
    \mathcal{P}(\bbO)$ preserves Segal equivalences.
  \item  Composition with $\tau$ restricts to a functor
    $\tau^{*} \colon \PSegO \to \PSegDFi$.
  \item This functor has a left adjoint $L_{\bbO} \circ \tau_{!}
    \colon \PSegDFi \to \PSegO$.
  \end{enumerate}
\end{lemma}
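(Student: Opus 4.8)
The plan is to treat part (i) as the only step carrying genuine content and to deduce (ii) and (iii) formally. Recall that the Segal equivalences in $\mathcal{P}(\DFi)$ are by definition the strongly saturated class generated by the maps $X_{\Seg} \to X$ for $X \in \DFi$, and likewise for $\mathcal{P}(\bbO)$. Since $\tau_{!}$ is a left adjoint it preserves colimits, and by a standard property of strongly saturated classes (cf. \cite{HTT}) it therefore carries the class generated by a set of maps into the class generated by their images. Thus for (i) it suffices to show that $\tau_{!}$ sends each generating map $X_{\Seg} \to X$ to a Segal equivalence in $\mathcal{P}(\bbO)$.

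To that end I would first record that $\tau_{!}$ takes the representable presheaf on $X$ to the representable presheaf on $\tau(X)$: this is immediate from the Yoneda lemma and the defining adjunction, since $\mathrm{Map}(\tau_{!}X, \mathcal{F}) \simeq \mathrm{Map}(X, \tau^{*}\mathcal{F}) \simeq \mathcal{F}(\tau X)$. Using the colimit presentation $X_{\Seg} = \colim_{E \to X \in \DFelX} E$ and the fact that $\tau_{!}$ preserves colimits, I obtain $\tau_{!}(X_{\Seg}) \simeq \colim_{E \to X \in \DFelX} \tau(E)$. Now Lemma~\ref{lem:taueleq} supplies an equivalence of indexing categories $\DFelX \isoto \bbOel_{/\tau(X)}$, under which this colimit is identified with $\colim_{E' \in \bbOel_{/\tau(X)}} E' = \tau(X)_{\Seg}$. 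Verifying that this identification is compatible with the augmentations, so that $\tau_{!}(X_{\Seg} \to X)$ is exactly the generating map $\tau(X)_{\Seg} \to \tau(X)$, then yields (i).

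Part (ii) is the adjoint reformulation of (i). Concretely, for $\mathcal{F} \in \PSegO$ and $X \in \DFi$ the adjunction gives $\mathrm{Map}(X_{\Seg}, \tau^{*}\mathcal{F}) \simeq \mathrm{Map}(\tau_{!}X_{\Seg}, \mathcal{F})$ and similarly for $X$, so $\tau^{*}\mathcal{F}$ is local with respect to $X_{\Seg} \to X$ if and only if $\mathcal{F}$ is local with respect to $\tau_{!}(X_{\Seg} \to X) \simeq (\tau(X)_{\Seg} \to \tau(X))$; the latter holds because $\mathcal{F}$ is a Segal presheaf. Hence $\tau^{*}$ restricts to a functor $\PSegO \to \PSegDFi$.

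Finally, (iii) is a formal consequence of (ii). Writing $j_{\bbO}$ and $j_{\DFi}$ for the fully faithful inclusions of the subcategories of Segal presheaves, and recalling that $L_{\bbO}$ is left adjoint to $j_{\bbO}$, I would check that $L_{\bbO}\tau_{!}j_{\DFi}$ is left adjoint to the restricted $\tau^{*}$ by a routine chain of natural equivalences, applying in turn $L_{\bbO} \dashv j_{\bbO}$, the adjunction $\tau_{!} \dashv \tau^{*}$, and part (ii), which guarantees that $\tau^{*}$ of a Segal presheaf is again Segal so that the final mapping space may be computed inside $\PSegDFi$. The main obstacle throughout is the single geometric input in the second paragraph: upgrading the abstract equivalence $\DFelX \simeq \bbOel_{/\tau(X)}$ of Lemma~\ref{lem:taueleq} to a compatibility with the structure maps of the two Segal colimits and their augmentations, so as to obtain an honest identification $\tau_{!}(X_{\Seg}\to X) \simeq (\tau(X)_{\Seg}\to\tau(X))$ of morphisms; once this is in hand, the rest is adjunction and localization bookkeeping.
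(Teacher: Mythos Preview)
Your proposal is correct and follows essentially the same approach as the paper: reduce (i) to the generating maps, use Lemma~\ref{lem:taueleq} together with preservation of colimits to identify $\tau_{!}(X_{\Seg}\to X)$ with $(\tau X)_{\Seg}\to \tau X$, and then deduce (ii) and (iii) formally. The paper simply states that (ii) and (iii) are immediate consequences of (i), whereas you spell out the adjunction bookkeeping and flag the compatibility with augmentations; that compatibility is unproblematic since the equivalence of indexing categories in Lemma~\ref{lem:taueleq} is induced by $\tau$ itself and thus automatically commutes with the structure maps to $\tau(X)$.
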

\begin{proof}
  To prove (i) it suffices to show that the images under $\tau_{!}$ of
  the generating Segal equivalences $X_{\Seg} \to X$ in
  $\mathcal{P}(\DFi)$ are Segal equivalences in
  $\mathcal{P}(\bbO)$. But since $\tau_{!}$ preserves colimits,
  Lemma~\ref{lem:taueleq} implies that $\tau_{!}(X_{\Seg}) \simeq (\tau
  X)_{\Seg}$, and so these maps are among the generating Segal equivalences
  for $\mathcal{P}(\bbO)$. Then the claims (ii) and (iii) are immediate
  consequences of (i).
\end{proof}

\begin{lemma}\label{lem:unitel}
  For $E \in \DF^{\txt{el}}$, the map $E \to \tau^*(\tau E)$ in
  $\mathcal{P}(\DFi)$ is an equivalence.
\end{lemma}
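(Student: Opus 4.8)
The plan is to exploit that $\DFi$ and $\bbO$ are ordinary $1$-categories, so that $E$ and $\tau^{*}(\tau E)$ are presheaves of \emph{discrete} spaces and the unit map in question is, at an object $Y = ([n], f) \in \DFi$, simply the function
\[ \mathrm{Hom}_{\DFi}(Y, E) \longrightarrow \mathrm{Hom}_{\bbO}(\tau Y, \tau E) \]
given by applying $\tau$ (the unit at a representable corresponds to $\id_{\tau E}$ under Yoneda). I would show this is a bijection for every $Y$. By Lemma~\ref{lem:taueleq} the object $\tau E$ is elementary, and there are two cases for $E$: the edge $([0], \mathbf 1)$, with $\tau E = \eta$, and the corolla $([1], \mathbf k \to \mathbf 1)$, with $\tau E = C_{k}$.

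First I would organize both hom-sets using the active--inert factorization systems, which $\tau$ preserves. Factoring a map $Y \to E$ as an active map followed by an inert one, $Y \to Z \hookrightarrow E$, and noting that the inert maps into an elementary object have elementary sources --- for $C_{k}$ these are the identity together with the $k+1$ edge inclusions (the pullback condition forces any length-$1$ inert map into $C_{k}$ to be invertible), matching the subtrees of $C_{k}$ --- we obtain
\[ \mathrm{Hom}_{\DFi}(Y, E) \;\cong\; \coprod_{Z \hookrightarrow E} \mathrm{Hom}^{\txt{act}}_{\DFi}(Y, Z), \]
the coproduct over isomorphism classes of inert maps into $E$ from elementary objects $Z$, and similarly on the $\bbO$ side. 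Since these inert maps carry no nontrivial automorphisms over $E$, there is no overcounting. By Lemma~\ref{lem:taueleq}, $\tau$ matches the two indexing sets, so it suffices to prove that for each elementary $Z$ the map $\tau \colon \mathrm{Hom}^{\txt{act}}_{\DFi}(Y, Z) \to \mathrm{Hom}^{\txt{act}}_{\bbO}(\tau Y, \tau Z)$ is a bijection.

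The advantage of passing to active maps is that the Cartesian condition becomes decisive: an active map has invertible natural-transformation part, which pins down $\phi$ (via $|f(i)| = |g(\phi(i))|$) and removes the ambiguity that empty fibres $f(i) = \emptyset$ would otherwise create. Concretely, for $Z$ the edge an active map exists precisely when $Y$ is linear, in which case it is unique; since $Y$ is linear exactly when $\tau Y$ is (equivalently, when every vertex of $\tau Y$ is unary), $\tau$ gives a bijection here. For $Z = C_{k}$, invertibility of the natural transformation forces $Y$ to have corolla shape --- a single branching level of width $k$, with all other structure maps isomorphisms --- and the active maps then form a torsor over the bijections $f(a) \cong \mathbf k$ at the branching level $a$; on the other side the active maps $\tau Y \to C_{k}$ are exactly the bijections between the $k$ leaves of $\tau Y$ and those of $C_{k}$, and one checks from the definition of $\tau$ (whose underlying edge map on $f(i)$ is $\eta_{i}$) that $\tau$ carries the one torsor bijectively onto the other.

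The main obstacle is this last, corolla, case of the final step: one must verify not merely that the two sets of active maps have the same cardinality, but that $\tau$ \emph{realizes} the bijection --- i.e. that the induced edge map determines, and is determined by, the labelling $\eta_{a} \colon f(a) \cong \mathbf k$ --- and that the Cartesian condition indeed forces the claimed corolla shape of $Y$. Everything here rests on Lemma~\ref{lem:moredge}, which guarantees that a morphism of $\bbO$ is recorded faithfully by its map on edges, so that the entire comparison can be carried out at the level of edge sets.
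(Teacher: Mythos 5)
Your overall strategy is sound and in fact runs parallel to the paper's own proof: the paper likewise reduces to a hom-set comparison and splits maps $\tau X \to \tau E$ according to whether they are surjective (your ``inert factor is an isomorphism'') or factor through an edge of the corolla. The genuine problem is in your final step, the explicit identification of the active hom-sets, which is false in the unary case $k = 1$. Take $Z = ([1], \mathbf{1} \to \mathbf{1})$, so $\tau Z = C_{1}$, and let $Y = ([n], \mathbf{1} = \cdots = \mathbf{1})$ be linear of length $n$. There are exactly $n$ active maps $Y \to Z$ in $\DFi$, not one: the natural-transformation part is forced (every set in sight is $\mathbf{1}$), but the active map $\phi \colon [n] \to [1]$ is \emph{not} pinned down by cardinalities, precisely because $|f(i)| = 1 = |g(j)|$ for all $i, j$; each of the $n$ choices of where $\phi$ jumps from $0$ to $1$ gives a distinct active map. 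Likewise there are exactly $n$ active maps $\tau Y \to C_{1}$: by Lemma~\ref{lem:moredge} such a map is determined by its edge map, and the admissible edge maps send the edges in levels $0, \ldots, a-1$ to the leaf of $C_{1}$ and those in levels $a, \ldots, n$ to the root, for any $a \in \{1, \ldots, n\}$ --- equivalently, one chooses which of the $n$ unary vertices of $\tau Y$ is sent to the vertex of $C_{1}$, all others going to identity operations. So both of your claims --- that the active maps $Y \to Z$ form a torsor over the bijections $f(a) \cong \mathbf{k}$, and that the active maps $\tau Y \to C_{k}$ ``are exactly the bijections between the leaves'' --- fail when $k = 1$: a morphism of trees is recorded by its values on \emph{all} edges, not just on the leaves.

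The conclusion you need in this case (that $\tau$ is a bijection on these active hom-sets) is nevertheless true: both sides are parametrized by the jump level $a$, and $\tau$ visibly carries the $\DFi$-map with jump at $a$ to the $\bbO$-map with jump at $a$. But this requires its own argument; as written, your proof asserts that both hom-sets are singletons, which is wrong, so the $k = 1$ case is a real gap that must be patched separately (the cases $k = 0$ and $k \geq 2$ of your argument are fine, since there the arity of the branching vertex does determine everything). It is worth noting how the paper's formulation sidesteps this: rather than computing both hom-sets and comparing cardinalities, it fixes an arbitrary map $\tau X \to \tau E$ and shows it has a unique preimage under $\tau$, a statement that is insensitive to how many such maps there are.
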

\begin{proof}
  First consider the case where $E = ([0], \mathbf{1})$, so that $\tau
  E = \eta$. For any map of trees $\varphi: T \to \eta$, the tree $T$
  must be \emph{linear}, i.e. have only unary vertices. But then $T =
  \tau([n], \mathbf{1} = \cdots = \mathbf{1})$ for some $n$ and
  $\varphi = \tau(\psi)$ for the unique map $\psi: ([n], \mathbf{1} =
  \cdots = \mathbf{1}) \to ([0], \mathbf{1})$ in $\DFi$. It follows
  that $\tau^*\eta = ([0],\mathbf{1})$. The argument for $E = ([1],
  \mathbf{k} \to 1)$ is similar. Note that $\tau E$ is the corolla
  $C_k$. Consider an $X \in \DFi$ with a map $\tau X \to \tau E$. If
  it is not surjective, then it factors as $\tau X \to \eta \to \tau
  E$, where $\eta \to \tau E$ is the inclusion of some edge of $C_k$,
  and one reduces to the previous case to see that $\tau X \to \tau E$
  is the image of the unique map $X \to E$ in $\DFi$. If $\tau X \to
  \tau E$ is surjective, then clearly $X$ must be of the form $([n],
  \mathbf{k} \simeq \cdots \simeq \mathbf{k} \to \mathbf{1} = \cdots =
  \mathbf{1})$ for some $n\geq 1$. Again one observes that there is a
  unique map $X \to E$ whose image is $\tau X \to \tau E$, which
  implies the lemma.
\end{proof}

\section{Proof of the Comparison Result}\label{sec:comparison}
Our goal in this section is to prove that the \icats{} $\PSegO$ and
$\PSegDFi$ are equivalent. More precisely, we saw in the previous
section that the map $\tau \colon \DFi \to \bbO$ induces a functor
between the \icats{} of Segal presheaves, and we will show that this
gives the desired equivalence:
\begin{thm}\label{thm:taueq}
  The functor $\tau^{*} \colon \PSegO \to \PSegDFi$ is an equivalence
  of \icats{}.
\end{thm}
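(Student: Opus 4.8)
The plan is to upgrade the adjunction of the previous lemma. Write $F := L_{\bbO}\tau_{!} \colon \PSegDFi \to \PSegO$ for the left adjoint of $\tau^{*}$; I will show $\tau^{*}$ is an equivalence by proving that $F$ is fully faithful and essentially surjective, since then $F$ is an equivalence and its right adjoint $\tau^{*}$ is the inverse equivalence. Essential surjectivity of $F$ is the easy half. Every object of $\PSegO$ is a colimit of localized representables $L_{\bbO}T$, and each of these is a colimit of the $L_{\bbO}E'$ with $E' \in \bbOel$ (since $T \simeq T_{\Seg} = \colim_{E' \in \bbOelT} E'$ after localization). As $F$ preserves colimits and $F(L_{\DFi}E) \simeq L_{\bbO}\tau_{!}E \simeq L_{\bbO}(\tau E)$ for $E \in \DFel$ --- using that $\tau_{!}$ preserves Segal equivalences --- the essential image of $F$ contains every generator $L_{\bbO}E'$ by Lemma~\ref{lem:taueleq}, so it is everything. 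As a complementary observation, $\tau^{*}$ is conservative: the square relating $\tau^{*}$ to the equivalence $\mathcal{P}(\bbOel) \simeq \mathcal{P}(\DFel)$ of Lemma~\ref{lem:taueleq} commutes, and both restriction-to-elementaries functors are conservative because a map of Segal presheaves is an equivalence as soon as it is one on elementary objects.

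Full faithfulness of $F$ is the assertion that the unit $\mathcal{F} \to \tau^{*}F\mathcal{F} = \tau^{*}L_{\bbO}\tau_{!}\mathcal{F}$ is an equivalence for every $\mathcal{F} \in \PSegDFi$. Both source and target are Segal presheaves on $\DFi$ --- the target because $\tau^{*}$ preserves Segal presheaves --- so by the conservativity observation it suffices to check this on an elementary object $E \in \DFel$. There the unit factors as
\[ \mathcal{F}(E) \xrightarrow{\ \alpha\ } (\tau_{!}\mathcal{F})(\tau E) \xrightarrow{\ \beta\ } (L_{\bbO}\tau_{!}\mathcal{F})(\tau E), \]
where $\alpha$ is the value at $E$ of the presheaf-level unit $\mathcal{F} \to \tau^{*}\tau_{!}\mathcal{F}$ and $\beta$ is the value of the Segal localization map. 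The map $\alpha$ is harmless: by the pointwise formula $(\tau_{!}\mathcal{F})(\tau E)$ is the colimit of $\mathcal{F}$ over the comma category $\DFi \times_{\bbO} \bbO_{/\tau E}$, and Lemma~\ref{lem:unitel} says exactly that $(E,\mathrm{id})$ is a terminal object of this comma category; a terminal object is cofinal for colimits, so $\alpha$ is an equivalence.

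The entire content of the theorem is thus concentrated in the claim that $\beta$ is an equivalence --- that Segal localization does not alter the value of $\tau_{!}\mathcal{F}$ at the elementary object $\tau E$ --- and this is where I expect the genuine difficulty to lie. It is not a formal consequence of localization, since Segalification \emph{does} in general change values at elementary objects: for a representable $T$ with several vertices, localizing along $T_{\Seg} \to T$ adjoins the ``total composite'' active operation $C_{k} \to T$ to the value at a corolla. The point must be that for a presheaf of the special shape $\tau_{!}\mathcal{F}$ with $\mathcal{F}$ already Segal, all such active operations are already present, because every active morphism of trees is the image under $\tau$ of a composite of morphisms in $\DFi$ --- heuristically, the passage from the \emph{leveled} trees of $\DFi$ to the \emph{unleveled} trees of $\bbO$ is invisible to the Segal conditions. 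I would isolate this in the equivalent form that $\tau^{*}$ preserves Segal equivalences, which unwinds to the statement that for every Segal $\mathcal{F}$ and every $X \in \DFi$ the canonical map $\lim_{(Y,g) \in \DFi \times_{\bbO} \bbO_{/\tau X}} \mathcal{F}(Y) \to \mathcal{F}(X)$, induced by restriction to $(X,\mathrm{id})$, is an equivalence. The plan is to prove this by a cofinality argument: exhibit inside the comma category $\DFi \times_{\bbO} \bbO_{/\tau X}$ a suitable subcategory --- assembled from the active--inert factorizations and the slice equivalence $\DFelX \simeq \bbOel_{/\tau X}$ of Lemma~\ref{lem:taueleq} --- over which the limit of any Segal presheaf collapses to $\mathcal{F}(X)$. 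Establishing the weak contractibility of the relevant comma categories, which encodes the combinatorics of factoring a morphism of trees into operadic composites, is the main obstacle; everything else is formal.
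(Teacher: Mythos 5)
Your formal outline is sound, and it assembles the pieces differently from the paper: where the paper exhibits $\PSegO$ and $\PSegDFi$ as monadic over $\mathcal{P}(\bbOel) \simeq \mathcal{P}(\DFel)$ and compares the two monads via the right adjoint $\tau_{*}$ and \cite{HA}*{Corollary 4.7.4.16}, you work directly with the unit of $L_{\bbO}\tau_{!} \dashv \tau^{*}$ together with conservativity of $\tau^{*}$ on Segal objects. Your analysis of $\alpha$ is correct, and is the same use of Lemma~\ref{lem:unitel} that the paper makes in Lemma~\ref{lem:itaueq} (there on the right-adjoint side); you also correctly concentrate all the difficulty in $\beta$. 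Two formal repairs would be needed even so: (a) your essential-surjectivity argument is circular as stated, since the essential image of a colimit-preserving functor is not closed under colimits until full faithfulness is known (harmless: unit equivalence plus conservativity already force the counit to be an equivalence); (b) even granting that $\tau^{*}$ preserves Segal equivalences, $\beta$ does not follow immediately, because a Segal equivalence is not a pointwise equivalence and $\tau^{*}\tau_{!}\mathcal{F}$ is not Segal --- one must first show that the presheaf-level unit $\mathcal{F} \to \tau^{*}\tau_{!}\mathcal{F}$ is a Segal equivalence, e.g.\ by writing $\mathcal{F}$ as a colimit of representables and checking each unit $X \to \tau^{*}\tau X$ via the identification $\tau^{*}((\tau X)_{\Seg}) \simeq X_{\Seg}$ coming from Lemmas~\ref{lem:taueleq} and~\ref{lem:unitel}, then applying 2-out-of-3.

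The genuine gap is the key proposition itself, namely the paper's Proposition~\ref{propn:tau*segeq} that $\tau^{*}$ preserves Segal equivalences: you only name it and defer it, and your plan for proving it is set up incorrectly in two ways. First, your ``unwinding'' covers only the generating Segal equivalences $T_{\Seg} \to T$ with $T$ in the image of $\tau$: for $T \simeq \tau X$, mapping $\tau^{*}T_{\Seg} \to \tau^{*}T$ into a Segal presheaf $\mathcal{F}$ does produce exactly your limit statement over $\DFi \times_{\bbO} \bbO_{/\tau X}$. But $L_{\bbO}$ is generated by the maps $T_{\Seg} \to T$ for \emph{all} trees, and a tree whose leaves sit at different heights is not isomorphic to any $\tau X$; for such $T$ the required statement has target $\lim_{\bbOel_{/T}}$ rather than a single value of $\mathcal{F}$, and is not addressed by your formulation. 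This restriction is not removable: in the paper's induction the external boundary $\dext T$ of a level tree contains non-level subtrees, so no argument confined to level trees can close (and showing that the level trees generate the same localization is essentially another form of the theorem). Second, the collapse of your limit cannot be a cofinality statement, because $\tau$ is not full: for $X = ([2], \mathbf{2} = \mathbf{2} \to \mathbf{1})$, the map $C_{2} \to \tau X$ sending the vertex of $C_{2}$ to the subtree spanned by the root vertex and one of the two unary vertices lifts to no morphism of $\DFi$, so $(X, \mathrm{id})$ is \emph{not} a final object of $\DFi \times_{\bbO} \bbO_{/\tau X}$ --- the analogue of Lemma~\ref{lem:unitel} fails beyond elementary objects. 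The limit collapses only because $\mathcal{F}$ is Segal, and extracting that requires the genuinely combinatorial argument the paper supplies: induction on the number of vertices using $\dext T$ (Lemma~\ref{lem:dextTSeg}), a filtration of $\tau^{*}T$ by admissible non-degenerate maps, and the horn Segal equivalences of Lemma~\ref{lem:hornSegeq}. Without an argument of this kind, your proposal establishes the formal reductions of Theorem~\ref{thm:taueq} but not its actual content.
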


Since $\tau$ preserves inert-active factorizations, it restricts to a
functor $\tau_{\txt{int}} \colon \DFiint \to \bbOint$, and we have a
commutative diagram
\csquare{\PSegO}{\PSegDFi}{\PSeg(\bbOint)}{\PSeg(\DFiint),}{\tau^{*}}{j_{\bbO}^*}{j_{\DFi}^{*}}{\tau_{\txt{int}}^{*}}
where $j_{\bbO}$ and $j_{\DFi}$ denote the inclusions $\bbOint \to
\bbO$ and $\DFiint \to \DFi$, and $\PSeg(\DFiint)$ and
$\PSeg(\bbOint)$ denote the full subcategories of
$\mathcal{P}(\DFiint)$ and $\mathcal{P}(\bbOint)$ spanned by the
presheaves that are right Kan extensions of their restrictions to
$\DFel$ and $\bbOel$, respectively.

\begin{lemma}\label{lem:tauint}
  The functor $\tau_{\txt{int}}^{*} \colon \PSeg(\bbOint) \to
  \PSeg(\DFiint)$ is an equivalence.
\end{lemma}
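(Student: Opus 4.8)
The plan is to reduce the statement to the equivalence $\tau|_{\DFel} \colon \DFel \to \bbOel$ supplied by Lemma~\ref{lem:taueleq}, by identifying each of the two Segal categories with presheaves on the relevant elementary subcategory. The key input is the standard fact that for a fully faithful inclusion $j \colon \mathcal{C}_{0} \hookrightarrow \mathcal{C}$ of \icats{}, the right Kan extension $j_{*} \colon \mathcal{P}(\mathcal{C}_{0}) \to \mathcal{P}(\mathcal{C})$ is fully faithful (because the comma category computing the extension at an object $j(c_{0})$ has an initial object, so the counit $j^{*}j_{*} \to \id$ is an equivalence). Its essential image is precisely the full subcategory of presheaves that are right Kan extensions of their restriction along $j$, and on this subcategory the restriction $j^{*}$ is an equivalence onto $\mathcal{P}(\mathcal{C}_{0})$, inverse to $j_{*}$. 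Applying this to the inclusions $\bbOel \hookrightarrow \bbOint$ and $\DFel \hookrightarrow \DFiint$ gives equivalences
\[ \PSeg(\bbOint) \isoto \mathcal{P}(\bbOel), \qquad \PSeg(\DFiint) \isoto \mathcal{P}(\DFel), \]
each implemented by restriction.

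Next I would check that $\tau_{\txt{int}}^{*}$ really does send Segal presheaves to Segal presheaves. For $\mathcal{F} \in \PSeg(\bbOint)$ and $X \in \DFiint$, the comparison map $(\tau_{\txt{int}}^{*}\mathcal{F})(X) \to \lim_{E \in \DFelX}(\tau_{\txt{int}}^{*}\mathcal{F})(E)$ is, via the slice equivalence $\DFelX \isoto \bbOel_{/\tau(X)}$ of Lemma~\ref{lem:taueleq}, identified with the map $\mathcal{F}(\tau X) \to \lim_{E' \in \bbOel_{/\tau(X)}}\mathcal{F}(E')$, which is an equivalence because $\mathcal{F}$ is a Segal presheaf. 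With this in hand, I would observe that the square
\csquare{\PSeg(\bbOint)}{\PSeg(\DFiint)}{\mathcal{P}(\bbOel)}{\mathcal{P}(\DFel)}{\tau_{\txt{int}}^{*}}{}{}{(\tau|_{\DFel})^{*}}
commutes, since both composites send $\mathcal{F}$ to its restriction along the functor $\DFel \xrightarrow{\tau|_{\DFel}} \bbOel \hookrightarrow \bbOint$. The vertical arrows are the restriction equivalences just constructed, and the lower horizontal arrow $(\tau|_{\DFel})^{*}$ is an equivalence since $\tau|_{\DFel}$ is an equivalence by Lemma~\ref{lem:taueleq}; hence two-out-of-three forces $\tau_{\txt{int}}^{*}$ to be an equivalence.

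I expect no serious obstacle here: the entire argument is formal, with all of the genuine content concentrated in Lemma~\ref{lem:taueleq}, which provides simultaneously the equivalence $\DFel \simeq \bbOel$ of elementary categories and the compatible equivalences of elementary slices used to verify preservation of Segal presheaves. The only points demanding any attention are the standard behaviour of right Kan extension along a fully faithful functor and the commutativity of the square of restriction functors, neither of which is problematic.
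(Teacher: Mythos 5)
Your proposal is correct and follows essentially the same route as the paper: the same commutative square relating $\tau_{\txt{int}}^{*}$ to $(\tau|_{\DFel})^{*}$, with the vertical restriction functors identified as equivalences via right Kan extension along the fully faithful inclusions (the paper cites \cite{HTT}*{Proposition 4.3.2.15} for exactly the fact you prove by hand), followed by two-out-of-three. The only difference is that you explicitly verify that $\tau_{\txt{int}}^{*}$ preserves Segal presheaves using the slice equivalences $\DFelX \simeq \bbOel_{/\tau(X)}$ from Lemma~\ref{lem:taueleq}, a point the paper leaves implicit; this is a welcome but minor addition.
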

\begin{proof}
  Consider the commutative square
  \csquare{\PSeg(\bbOint)}{\PSeg(\DFiint)}{\mathcal{P}(\bbO^{\txt{el}})}{\mathcal{P}(\DF^{\txt{el}}).}{\tau_{\txt{int}}^{*}}{}{}{\tau|^{*}_{\DF^{\txt{el}}}}
  The map $\tau$ restricts to an equivalence $\DF^{\txt{el}} \to
  \bbO^{\txt{el}}$ by Lemma~\ref{lem:taueleq}, so the bottom
  horizontal map here is an equivalence. Moreover, the vertical maps
  are equivalences by \cite{HTT}*{Proposition 4.3.2.15}, since
  $\PSeg(\bbOint)$ and $\PSeg(\DFiint)$ are by definition the \icats{}
  of presheaves that are right Kan extensions of presheaves on
  $\bbO^{\txt{el}} \simeq \DF^{\txt{el}}$. By the 2-out-of-3 property,
  it follows that the top horizontal map $\tau_{\txt{int}}^{*}$ is
  also an equivalence.
\end{proof}

\begin{lemma}\ 
  \begin{enumerate}[(i)]
  \item The functor $j_{\bbO}^{*} \colon \PSegO \to \PSeg(\bbOint)$
    has a left adjoint $F_{\bbO} := L_{\bbO}j_{\bbO,!}$, and the
    adjunction $L_{\bbO}j_{\bbO,!} \dashv j_{\bbO}$ is monadic.
  \item The functor $j_{\DFi}^{*} \colon \PSegDFi \to \PSeg(\DFiint)$
    has a left adjoint $F_{\DFi} := L_{\DFi}j_{\DFi,!}$, and the
    adjunction $L_{\DFi}j_{\DFi,!} \dashv j_{\DFi}$ is monadic.
  \end{enumerate}
\end{lemma}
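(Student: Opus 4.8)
Both parts are proved by the same argument, so I will treat (i) in detail and indicate at the end the substitutions that give (ii).

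The first point is that $j_{\bbO}^{*}$ genuinely takes values in $\PSeg(\bbOint)$: this is immediate from the definitions, since a Segal presheaf on $\bbO$ is by definition one whose restriction to $\bbOint$ is a right Kan extension of its restriction to $\bbOel$, which is exactly the condition cutting out $\PSeg(\bbOint)$. To identify the left adjoint I would use that $j_{\bbO,!} \dashv j_{\bbO}^{*}$ as functors between the full presheaf \icats{} and that $\PSegO$ is the reflective localization at $L_{\bbO}$: for $X \in \PSeg(\bbOint)$ and a Segal presheaf $Y$ there is a chain
\[ \mathrm{Map}_{\PSegO}(L_{\bbO}j_{\bbO,!}X, Y) \simeq \mathrm{Map}_{\mathcal{P}(\bbO)}(j_{\bbO,!}X, Y) \simeq \mathrm{Map}_{\mathcal{P}(\bbOint)}(X, j_{\bbO}^{*}Y) \simeq \mathrm{Map}_{\PSeg(\bbOint)}(X, j_{\bbO}^{*}Y),\]
where the first equivalence uses that $Y$ is local, the second is the Kan extension adjunction, and the last is fullness of $\PSeg(\bbOint)$. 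This exhibits $F_{\bbO} = L_{\bbO}j_{\bbO,!}$ as left adjoint to $j_{\bbO}^{*}$.

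For monadicity I would apply Lurie's monadicity theorem \cite[Theorem 4.7.3.5]{HA}, whose remaining hypotheses are that $j_{\bbO}^{*}$ be conservative and that $\PSegO$ admit, and $j_{\bbO}^{*}$ preserve, geometric realizations of $j_{\bbO}^{*}$-split simplicial objects. Conservativity is nearly formal: $j_{\bbO} \colon \bbOint \to \bbO$ is a bijection on objects, so $j_{\bbO}^{*}$ does not change the value of a presheaf on any tree $T$; since equivalences in the full subcategories $\PSegO \subseteq \mathcal{P}(\bbO)$ and $\PSeg(\bbOint) \subseteq \mathcal{P}(\bbOint)$ are both detected objectwise, any map inverted by $j_{\bbO}^{*}$ is already an objectwise equivalence, hence an equivalence. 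That $\PSegO$ admits all geometric realizations is clear, as it is an accessible localization of a presheaf \icat{} and hence presentable.

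The essential and delicate step --- the one I expect to be the main obstacle --- is showing that $j_{\bbO}^{*}$ preserves geometric realizations of $j_{\bbO}^{*}$-split simplicial objects. One must resist the temptation to argue that $\PSegO$ is closed under \emph{all} geometric realizations inside $\mathcal{P}(\bbO)$: the Segal condition $\mathcal{F}(T) \simeq \lim_{E \in \bbOelT}\mathcal{F}(E)$ involves genuine fibre products over edge-values, and geometric realizations do not commute with such finite limits in $\mathcal{S}$. The point is rather to exploit the splitting. If $X_{\bullet}$ is a $j_{\bbO}^{*}$-split simplicial object of $\PSegO$, then under the restriction equivalence $\PSeg(\bbOint) \simeq \mathcal{P}(\bbOel)$ of \cite[Proposition 4.3.2.15]{HTT} the object $j_{\bbO}^{*}X_{\bullet}$ is a split simplicial object of $\mathcal{P}(\bbOel)$; that is, the simplicial space $X_{\bullet}(E)$ carries a splitting (augmentation and extra degeneracies) \emph{naturally in} $E \in \bbOel$. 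I would then observe that for each tree $T$ the Segal limit $X_{\bullet}(T) \simeq \lim_{E \in \bbOelT} X_{\bullet}(E)$ is again split: the diagram is indexed by a category over $\bbOel$ and the splittings are natural, so they assemble to a splitting of the limit whose augmentation is $\lim_{E \in \bbOelT}|X_{\bullet}(E)|$. Since the realization of a split simplicial object is computed by its augmentation, this yields $|X_{\bullet}(T)| \simeq \lim_{E \in \bbOelT}|X_{\bullet}(E)|$, which is precisely the statement that the objectwise realization $|X_{\bullet}|$ formed in $\mathcal{P}(\bbO)$ is again a Segal presheaf. Hence $L_{\bbO}$ acts as the identity on it, so this objectwise realization also computes the geometric realization in $\PSegO$, and applying $j_{\bbO}^{*}$ recovers the (split, hence objectwise) realization of $j_{\bbO}^{*}X_{\bullet}$ in $\PSeg(\bbOint)$. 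This establishes the required preservation, so \cite[Theorem 4.7.3.5]{HA} applies and the adjunction $L_{\bbO}j_{\bbO,!} \dashv j_{\bbO}^{*}$ is monadic.

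Part (ii) is proved verbatim, replacing $(\bbO, \bbOint, \bbOel, L_{\bbO}, j_{\bbO})$ by $(\DFi, \DFiint, \DFel, L_{\DFi}, j_{\DFi})$ throughout; the only structural input specific to the trees is the finiteness of the categories $\bbOelT$, whose analogue $\DFelX$ is likewise finite for every $X \in \DFi$.
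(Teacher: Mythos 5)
Your proof is correct and follows essentially the same route as the paper's: both verify the hypotheses of Lurie's monadicity theorem, with conservativity immediate from the fact that $\bbOint$ contains all objects of $\bbO$, and with the crucial step being that the realization of a $j_{\bbO}^{*}$-split simplicial object of Segal presheaves, computed in the ambient presheaf \icat{}, is again a Segal presheaf. The only difference is bookkeeping: the paper deduces this from the absoluteness of split colimits (\cite{HA}*{Remark 4.7.3.3}) combined with the fact that restriction along $j_{\bbO}$ preserves colimits, whereas you check it objectwise by commuting the split realizations past the Segal limits $\lim_{E \in \bbOelT}$ --- the same fact, deployed pointwise rather than categorically.
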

\begin{proof}
  We will prove (i); the proof of (ii) is the same. The existence of
  the left adjoint $L_{\bbO}j_{\bbO,!}$ is obvious, so by
  \cite{HA}*{Theorem 4.7.4.5} it remains to show that $j_{\bbO}^{*}$
  detects equivalences and that $j_{\bbO}^{*}$-split simplicial
  objects in $\PSegO$ have colimits and these are preserved by
  $j_{\bbO}^{*}$. Since $\bbOint$ contains all the objects of $\bbO$
  it is clear that $j_{\bbO}^{*}$ detects equivalences, and we also
  know that $\PSegO$ has small colimits. Suppose then that we have a
  $j_{\bbO}^{*}$-split simplicial object $X_{\bullet}$ in $\PSegO$,
  i.e. $j_{\bbO}^{*}X_{\bullet}$ extends to a split simplicial object
  $X'_{\bullet} \colon \simp_{-\infty}^{\op} \to \PSeg(\bbOint)$. If
  we consider $X_{\bullet}$ as a diagram in $\mathcal{P}(\bbO)$ with
  colimit $X$, then the colimit of $X_{\bullet}$ in $\PSegO$ is
  $L_{\bbO}X$. On the other hand, the colimit $X$ is preserved by
  $j^{*}_{\bbO} \colon \mathcal{P}(\bbO) \to \mathcal{P}(\bbOint)$
  (since this functor is a left adjoint). But by \cite{HA}*{Remark
    4.7.3.3}, the diagram $X'_{\bullet}$ is a colimit diagram also
  when viewed as a diagram in $\mathcal{P}(\bbOint)$, so
  $j^{*}_{\bbOint}X \simeq X'_{-\infty}$. This means that the presheaf
  $X$ satisfies the Segal condition, and so $X \simeq L_{\bbO}X$,
  i.e. $X$ is also the colimit of $X_{\bullet}$ in
  $\PSeg(\bbO)$. Since its image in $\PSeg(\bbOint)$ is
  $X'_{-\infty}$, this colimit is indeed preserved.
\end{proof}

The two preceding lemmas imply that $\PSegO$ and $\PSegDFi$ are both
the \icats{} of algebras for monads on $\mathcal{P}(\DF^{\txt{el}})
\simeq \mathcal{P}(\bbO^{\txt{el}})$. To show that these \icats{} are
the same, it will therefore be sufficient to prove that these two
monads are equivalent. Our proof of this makes use of the existence of
a \emph{right} adjoint to $\tau^{*}$:
\begin{propn}\label{propn:rightadj}
  The functor $\tau_{*}$ given by right Kan extension along $\tau$
  restricts to a functor \[\tau_{*} \colon \PSegDFi \to \PSegO,\] right
  adjoint to $\tau^{*}$.
\end{propn}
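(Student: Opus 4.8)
\emph{Plan.} The adjunction $\tau^{*}\dashv\tau_{*}$ is already present at the level of all presheaves: $\tau_{*}$ is right Kan extension along $\tau$, hence right adjoint to the restriction $\tau^{*}\colon\mathcal P(\bbO)\to\mathcal P(\DFi)$. Since $\PSegO\subseteq\mathcal P(\bbO)$ and $\PSegDFi\subseteq\mathcal P(\DFi)$ are full subcategories, and we already know that $\tau^{*}$ carries $\PSegO$ into $\PSegDFi$, the whole statement reduces to showing that $\tau_{*}$ carries $\PSegDFi$ into $\PSegO$. Indeed, once that is known the restricted adjunction is automatic: for $\mathcal G\in\PSegO$ and $\mathcal F\in\PSegDFi$ fullness gives
\[ \mathrm{Map}_{\PSegDFi}(\tau^{*}\mathcal G,\mathcal F)\simeq\mathrm{Map}_{\mathcal P(\DFi)}(\tau^{*}\mathcal G,\mathcal F)\simeq\mathrm{Map}_{\mathcal P(\bbO)}(\mathcal G,\tau_{*}\mathcal F)\simeq\mathrm{Map}_{\PSegO}(\mathcal G,\tau_{*}\mathcal F). \]
So the plan is to prove that $\tau_{*}\mathcal F$ is a Segal presheaf whenever $\mathcal F$ is one.

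First I would compute $\tau_{*}\mathcal F$ on the elementary trees. For $C\in\bbOel$ write $E_{C}\in\DFel$ for the object with $\tau E_{C}\simeq C$ (Lemma~\ref{lem:taueleq}). By Lemma~\ref{lem:unitel} we have $\tau^{*}C\simeq E_{C}$ in $\mathcal P(\DFi)$, i.e. $\bbO(\tau X,C)\simeq\DFi(X,E_{C})$ naturally in $X$; thus the comma category $\DFi\times_{\bbO}\bbO_{/C}$ is equivalent to $(\DFi)_{/E_{C}}$, which has a terminal object. Hence $(\tau_{*}\mathcal F)(C)$ is a limit over a category with an initial object and reduces to $(\tau_{*}\mathcal F)(C)\simeq\mathcal F(E_{C})$. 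Under the equivalence $\tau^{\txt{el}}\colon\DFel\simeq\bbOel$ this says exactly that $(\tau_{*}\mathcal F)|_{\bbOel}$ corresponds to $\mathcal F|_{\DFel}$.

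It remains to verify the Segal condition at a general tree, namely that $(\tau_{*}\mathcal F)|_{\bbOint}$ is a right Kan extension of its restriction to $\bbOel$. Because $\tau$ preserves inert maps we have a commuting square $\tau\circ j_{\DFi}=j_{\bbO}\circ\tau_{\txt{int}}$, and the cleanest route is to establish the Beck--Chevalley equivalence
\[ j_{\bbO}^{*}\,\tau_{*}\;\simeq\;\tau_{\txt{int},*}\,j_{\DFi}^{*}\colon\mathcal P(\DFi)\to\mathcal P(\bbOint). \]
Granting this, if $\mathcal F$ is Segal then $j_{\DFi}^{*}\mathcal F=\mathrm{Ran}_{\DFel\hookrightarrow\DFiint}(\mathcal F|_{\DFel})$ by definition, and since right Kan extensions compose and $\tau_{\txt{int}}$ restricts to the equivalence $\tau^{\txt{el}}$, one gets $\tau_{\txt{int},*}j_{\DFi}^{*}\mathcal F\simeq\mathrm{Ran}_{\bbOel\hookrightarrow\bbOint}((\tau^{\txt{el}})_{*}\mathcal F|_{\DFel})$. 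By Beck--Chevalley this is $(\tau_{*}\mathcal F)|_{\bbOint}$, which is therefore right Kan extended from $\bbOel$ --- precisely the Segal condition. In effect, Beck--Chevalley propagates the elementary computation of the previous paragraph to all of $\bbOint$.

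\emph{Main obstacle.} The hard part will be the Beck--Chevalley equivalence itself, which is not formal because $j_{\bbO}$ and $j_{\DFi}$ are mere subcategory inclusions rather than fibrations. Evaluating both sides at a tree $P$, the comparison is the restriction map
\[ \lim_{(X,\,\tau X\to P)\in(\DFi\times_{\bbO}\bbO_{/P})^{\op}}\mathcal F(X)\;\longrightarrow\;\lim_{(X,\,\tau X\hookrightarrow P)\in(\DFiint\times_{\bbOint}(\bbOint)_{/P})^{\op}}\mathcal F(X), \]
and what must be shown is that the inclusion of the comma category on inert structure maps into the comma category on all maps $\tau X\to P$ is coinitial. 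Concretely, for every $(X,w\colon\tau X\to P)$ one must prove that the category of factorizations of $w$ through level subtrees of $P$ --- inert maps $\tau X''\hookrightarrow P$ together with a map $X''\to X$ lying over $w$ --- is weakly contractible. I would attack this by passing to the active--inert factorization of $w$ and identifying this category, up to weak equivalence, with the poset of elementary subtrees of the active image of $w$; such posets are contractible, for the same combinatorial reason that makes each Segal core inclusion $P_{\Seg}\to P$ a Segal equivalence. Once coinitiality is established, Beck--Chevalley follows, $\tau_{*}$ preserves Segal presheaves, and the restricted adjunction is obtained as in the first paragraph.
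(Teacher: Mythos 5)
Your opening reduction is correct and agrees with how the paper organizes things: since $\PSegO$ and $\PSegDFi$ are full subcategories and $\tau^{*}$ is already known to carry $\PSegO$ into $\PSegDFi$, everything comes down to showing that $\tau_{*}$ preserves Segal presheaves. Your computation on elementary trees is also right, and is exactly the paper's Lemma~\ref{lem:itaueq}. The proposal breaks at the step you yourself flag as the main obstacle: the Beck--Chevalley equivalence $j_{\bbO}^{*}\tau_{*}\simeq\tau_{\txt{int},*}j_{\DFi}^{*}$ on all of $\mathcal{P}(\DFi)$ is \emph{false}, and the cofinality statement you propose to prove it with fails outright. Take for $P$ the tree with a binary root vertex (input edges $b,c$, root edge $d$) and a unary vertex with input $a$ and output $b$, so that the leaves $a$ and $c$ sit at different heights. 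Since every leaf of $\tau([n],f)$ lies in $f(0)$ and hence has height exactly $n$, no subtree of $P$ containing both vertices is of the form $\tau X''$. Now let $X=([1],\mathbf{2}\to\mathbf{1})$, so $\tau X=C_{2}$, and let $w\colon C_{2}\to P$ be the active morphism contracting the inner edge $b$ (the operadic composite of the two vertices: the leaves go to $a$ and $c$, the root to $d$). Any factorization $w=v\circ\tau(g)$ with $g$ a morphism in $\DFi$ and $v$ inert would force the subtree $\mathrm{im}(v)\subseteq P$ to contain $a$, $c$ and $d$, hence (by connectedness) also $b$, hence to be all of $P$ --- impossible, since $P$ is not a level tree. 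So for the object $(X,w)$ of $\DFi\times_{\bbO}\bbO_{/P}$ the category of factorizations of $w$ through inert maps out of level trees is \emph{empty}, not weakly contractible, and the inclusion of the inert comma category is not cofinal. (Separately, your description of the comma category has the variance backwards --- to restrict a limit over the opposite comma categories you need factorizations $X\to X''$ of $w$, not maps $X''\to X$ --- but the claim fails with either orientation: in your orientation the relevant category for this $(X,w)$ is a disjoint union of three points, one for each edge in the image of $w$.)

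This is not a repairable technicality; it is the essential difficulty of the whole comparison. The category $\bbO$ contains composition morphisms (inner faces such as $w$) that have no counterpart in $\DFi$ whenever the target tree is not levelable, so the canonical map from the colimit of the level subtrees of $P$ to $\tau^{*}P$ is not an equivalence in $\mathcal{P}(\DFi)$: evaluated at $([1],\mathbf{2}\to\mathbf{1})$ its image misses the two composite morphisms in $\bbO(C_{2},P)$. Consequently Beck--Chevalley genuinely fails for some (non-Segal) presheaf. What is true is that this map becomes an equivalence after localizing at the Segal equivalences --- but that assertion is essentially the paper's Proposition~\ref{propn:tau*segeq} (that $\tau^{*}$ preserves Segal equivalences), i.e.\ the very statement whose proof requires the hard work: an induction on the number of vertices using the external boundary $\dext T$ (Lemma~\ref{lem:dextTSeg}), the filtration of $\tau^{*}T$ by admissible maps, and the horn-filling Lemma~\ref{lem:hornSegeq}. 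Note, moreover, that Beck--Chevalley restricted to Segal presheaves is, given your elementary-tree computation, \emph{equivalent} to the proposition you are trying to prove, and a cofinality argument is diagram-independent, so it cannot exploit Segal locality; there is thus no way to salvage your step by ``working only with Segal $\mathcal{F}$''. Once Proposition~\ref{propn:tau*segeq} is in hand, the proposition follows by precisely the formal argument of your first paragraph: for $\mathcal{F}\in\PSegDFi$ and any Segal equivalence $A\to B$ in $\mathcal{P}(\bbO)$, the map $\mathrm{Map}(B,\tau_{*}\mathcal{F})\simeq\mathrm{Map}(\tau^{*}B,\mathcal{F})\to\mathrm{Map}(\tau^{*}A,\mathcal{F})\simeq\mathrm{Map}(A,\tau_{*}\mathcal{F})$ is an equivalence, so $\tau_{*}\mathcal{F}$ is local, i.e.\ a Segal presheaf, and the adjunction restricts.
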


Let us show how to deduce Theorem~\ref{thm:taueq} from this; the
remainder of this section is then devoted to proving
Proposition~\ref{propn:rightadj}.

\begin{lemma}\label{lem:itaueq}
  The canonical map $\tau^{*}_{\txt{int}}j_{\bbO}^{*}\tau_{*} \simeq
  j_{\DFi}^{*}\tau^{*}\tau_{*} \to j_{\DFi}^{*}$ is a natural equivalence.
\end{lemma}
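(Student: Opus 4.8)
The plan is to recognize the displayed natural transformation as $j_{\DFi}^{*}$ applied to the counit $\varepsilon \colon \tau^{*}\tau_{*} \to \id_{\PSegDFi}$ of the adjunction of Proposition~\ref{propn:rightadj}: the first equivalence is simply the commutative square preceding Lemma~\ref{lem:tauint} whiskered with $\tau_{*}$, so the content is that $j_{\DFi}^{*}(\varepsilon)$ is an equivalence. This can be checked objectwise, so I fix a Segal presheaf $\mathcal{G} \in \PSegDFi$ and aim to show that $j_{\DFi}^{*}(\tau^{*}\tau_{*}\mathcal{G}) \to j_{\DFi}^{*}\mathcal{G}$ is an equivalence in $\PSeg(\DFiint)$. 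Note first that both sides genuinely lie in $\PSeg(\DFiint)$: the target does by the very definition of a Segal presheaf on $\DFi$, and the source does because $\tau_{*}\mathcal{G} \in \PSegO$ by Proposition~\ref{propn:rightadj}, whence $\tau^{*}\tau_{*}\mathcal{G} \in \PSegDFi$.

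Next I would reduce to elementary objects. Since $\PSeg(\DFiint)$ is by definition the \icat{} of presheaves on $\DFiint$ that are right Kan extended from $\DFel$, restriction along $\DFel \hookrightarrow \DFiint$ is an equivalence $\PSeg(\DFiint) \xrightarrow{\sim} \mathcal{P}(\DFel)$ by \cite{HTT}*{Proposition 4.3.2.15} (exactly as in the proof of Lemma~\ref{lem:tauint}); in particular it is conservative. It therefore suffices to prove that $\varepsilon$ becomes an equivalence after evaluating at each $E \in \DFel$.

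On an elementary object the computation is direct. Writing $\tau E \in \bbOel$ for the image of $E$, the Yoneda lemma and the adjunction $\tau^{*} \dashv \tau_{*}$ give
\[ (\tau^{*}\tau_{*}\mathcal{G})(E) = (\tau_{*}\mathcal{G})(\tau E) \simeq \operatorname{Map}_{\mathcal{P}(\DFi)}(\tau^{*}(\tau E), \mathcal{G}), \]
where $\tau^{*}(\tau E)$ is the restriction to $\DFi$ of the presheaf on $\bbO$ represented by $\tau E$. Under this identification, together with $\mathcal{G}(E) \simeq \operatorname{Map}_{\mathcal{P}(\DFi)}(E, \mathcal{G})$, the evaluated counit $\varepsilon_{E}$ becomes precomposition with the canonical comparison map $E \to \tau^{*}(\tau E)$ --- precisely the map that Lemma~\ref{lem:unitel} proves to be an equivalence. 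Hence $\varepsilon_{E}$ is an equivalence, and conservativity of restriction to $\DFel$ then finishes the argument.

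The substantive input is thus entirely contained in Lemma~\ref{lem:unitel}; everything else is formal. The step I expect to require the most care --- and the most likely place to slip --- is the last identification: checking that, after passing through the adjunction and Yoneda, the evaluated counit really is precomposition with the comparison map $E \to \tau^{*}(\tau E)$ of Lemma~\ref{lem:unitel}, rather than some other natural map between the same two presheaves. Once this bookkeeping is secured, the conservativity reduction immediately yields the desired natural equivalence.
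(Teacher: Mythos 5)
Your proposal is correct and takes essentially the same route as the paper: the same reduction to elementary objects via the equivalences $\PSeg(\bbOint) \simeq \mathcal{P}(\bbOel) \simeq \mathcal{P}(\DFel) \simeq \PSeg(\DFiint)$, with Lemma~\ref{lem:unitel} as the sole substantive input. The only difference is cosmetic bookkeeping in the last step: the paper computes $(\tau_{*}\mathcal{F})(\tau E)$ by the pointwise limit formula for right Kan extension and uses Lemma~\ref{lem:unitel} to exhibit $(E, \id_{\tau E})$ as an initial object of $(\DFiop)_{\tau E/}$, whereas you run the same computation through Yoneda and the adjunction $\tau^{*} \dashv \tau_{*}$, identifying the evaluated counit with precomposition along the equivalence $E \to \tau^{*}(\tau E)$.
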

\begin{proof}
Recall the commutative diagram
\[
\begin{tikzcd}
\PSeg(\bbO) \arrow{d}{j^*_\bbO} \arrow{r}{\tau^*} & \PSeg(\DFi) \arrow{d}{j_{\DFi}^*} \\
\PSeg(\bbOint) \arrow{r}{\tau_{\txt{int}}^*} \arrow{d} & \PSeg(\DFiint) \arrow{d} \\
\mathcal{P}(\bbO^{\txt{el}}) \arrow{r} & \mathcal{P}(\DF^{\txt{el}}).
\end{tikzcd}
\]
We saw in the proof of Lemma \ref{lem:tauint} that the lower two
vertical arrows are equivalences. Therefore it suffices to check that
for $\mathcal{F} \in \PSegDFi$ and $E \in \DF^{\txt{el}}$ the natural
map $\tau^*\tau_*\mathcal{F}(E) \rightarrow \mathcal{F}(E)$ is an
equivalence. We may identify the domain of this map as
\[(\tau_{*}\mathcal{F})(\tau E) \simeq \lim_{X \in (\DFiop)_{\tau E/}}
\mathcal{F}(X),\] where $(\DFiop)_{\tau E/} \simeq ((\DFi)_{/\tau
  E})^{\op}$ and $(\DFi)_{/\tau E} := \DFi \times_{\bbO} \bbO_{/\tau
  E}$. But the unit morphism $E \to \tau^{*}\tau_{!}E \simeq
\tau^{*}(\tau E)$ is an equivalence for $E \in \DF^{\txt{el}}$ by
Lemma \ref{lem:unitel}, hence $(E, \tau E = \tau E)$ is a terminal
object in $(\DFi)_{/\tau E}$. Therefore it is initial in
$(\DFiop)_{\tau E/}$ and this implies the map is an equivalence.
\end{proof}

\begin{proof}[Proof of Theorem~\ref{thm:taueq}]
  By \cite{HA}*{Corollary 4.7.4.16} it suffices to show that the
  canonical natural transformation $F_{\DFi}\circ \tau^{*}_{\txt{int}} \to
  \tau^{*}F_{\bbO}$ is an equivalence. But by
  Proposition~\ref{propn:tau*segeq} these functors are both left
  adjoints, and so we have an equivalence of left adjoints \IFF{} the
  corresponding transformation of right adjoints $j_{\bbO}^{*}\tau_{*}
  \to (\tau^{*}_{\txt{int}})^{-1} j_{\DFi}^{*}$ is an
  equivalence. This now follows from Lemma~\ref{lem:itaueq}.
\end{proof}

Proposition~\ref{propn:rightadj} is an immediate consequence of the
following result, to which we now turn:
\begin{propn}\label{propn:tau*segeq}
  The functor $\tau^{*} \colon \mathcal{P}(\bbO) \to
  \mathcal{P}(\DFi)$ preserves Segal equivalences.
\end{propn}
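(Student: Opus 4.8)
The plan is to reduce to the generating Segal equivalences and then to a cofinality computation. Since colimits of presheaves are computed pointwise, $\tau^{*}$ preserves all small colimits, so exactly as in the proof that $\tau_{!}$ preserves Segal equivalences it suffices to show that $\tau^{*}$ sends each generating Segal equivalence $T_{\Seg} \to T$ (for $T \in \bbO$) to a Segal equivalence in $\mathcal{P}(\DFi)$.

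To check that $\tau^{*}(T_{\Seg} \to T)$ is a Segal equivalence I would map into an arbitrary Segal presheaf $\mathcal{F} \in \PSegDFi$ and show the resulting map of mapping spaces is an equivalence. Both sides are limits of $\mathcal{F}$: writing $\tau^{*}T \simeq \colim_{X \in (\DFi)_{/T}} X$ for the tautological colimit of representables over $(\DFi)_{/T} = \DFi \times_{\bbO} \bbO_{/T}$, the mapping space out of $\tau^{*}T$ is $\lim_{X \in ((\DFi)_{/T})^{\op}} \mathcal{F}(X)$. On the other side $\tau^{*}T_{\Seg} \simeq \colim_{E \in \bbOelT} \tau^{*}E$, and for elementary $E$ the pair $(\tau^{-1}E, \mathrm{id})$ is final in $(\DFi)_{/E}$ by Lemma~\ref{lem:unitel} (just as in the proof of Lemma~\ref{lem:itaueq}); hence the mapping space out of $\tau^{*}E$ is $\mathcal{F}(\tau^{-1}E)$, where $\tau^{-1}E \in \DFel$ denotes the object corresponding to $E$ under the equivalence $\DFel \simeq \bbOel$ of Lemma~\ref{lem:taueleq}. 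The claim thus reduces to showing that the natural map
\[ \lim_{X \in ((\DFi)_{/T})^{\op}} \mathcal{F}(X) \longrightarrow \lim_{E \in \bbOelT} \mathcal{F}(\tau^{-1}E) \]
is an equivalence for every Segal presheaf $\mathcal{F}$ and every $T \in \bbO$.

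I would prove this equivalence in two steps. First, the Segal condition on $\mathcal{F}$ replaces each value $\mathcal{F}(X)$ by $\lim_{E'' \in \DFelX} \mathcal{F}(E'')$; interchanging limits rewrites the left-hand side as a limit of $\mathcal{F}(E'')$ over the category of pairs $(E'' \hookrightarrow X,\ \tau X \to T)$ with $E'' \in \DFel$ inert in $X$, and projecting such a pair to the composite $(E'',\ \tau E'' \to T)$ should identify this limit with $\lim_{((\DFel)_{/T})^{\op}} \mathcal{F}$, where $(\DFel)_{/T} = \DFel \times_{\bbO} \bbO_{/T}$. Second, because $\tau E''$ is elementary, the active–inert factorization of any map $\tau E'' \to T$ takes the form $\tau E'' \xrightarrow{\sim} E \hookrightarrow T$ with $E \in \bbOelT$ and invertible active part; combined with Lemma~\ref{lem:taueleq} this yields an equivalence $(\DFel)_{/T} \simeq \bbOelT$ carrying the limit to $\lim_{E \in \bbOelT} \mathcal{F}(\tau^{-1}E)$, as required.

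The second step and the formal manipulations are routine, so the main obstacle is the cofinality statement underlying the first step, namely that after the Segal expansion the projection from the category of pairs $(E'' \hookrightarrow X,\ \tau X \to T)$ to $(\DFel)_{/T}$ is coinitial. I expect to establish this by verifying that the relevant comma categories are weakly contractible, in the style of the cofinality argument in Lemma~\ref{lem:ieq}; the crucial point is that an inert elementary inclusion $E'' \hookrightarrow X$ together with a map $\tau X \to T$ is pinned down, up to contractible choice, by the induced map on the single elementary piece $E''$.
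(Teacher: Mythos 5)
Your initial reductions are fine: passing to the generating Segal equivalences $T_{\Seg}\to T$, testing against an arbitrary Segal presheaf $\mathcal{F}\in\PSegDFi$, and identifying $\tau^{*}E$ with the representable $\tau^{-1}E$ for elementary $E$ via Lemmas~\ref{lem:unitel} and \ref{lem:taueleq} (note that this reformulation amounts to proving Proposition~\ref{propn:rightadj} directly, which is legitimate since the two are equivalent by adjunction). The proof breaks at your second step, and the damage propagates back to the first. It is false that a map $\tau E''\to T$ from an elementary object has invertible active part: a map $C_{k}\to T$ in $\bbO$ corresponds to an \emph{arbitrary} subtree $S\subseteq T$ with $k$ leaves together with a bijection of leaf sets, and its active--inert factorization is an active map $C_{k}\to S$ followed by the inert inclusion $S\hookrightarrow T$, where the active part is invertible only when $S$ is a corolla. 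So as soon as $T$ has at least two vertices, the category $(\DFel)_{/T}=\DFel\times_{\bbO}\bbO_{/T}$ contains objects with no counterpart in $\bbOelT$: the active map $C_{k}\to T$ (with $k$ the number of leaves of $T$), and also the degenerate maps $C_{1}\to\eta\hookrightarrow T$. Hence $(\DFel)_{/T}\not\simeq\bbOelT$, and the claimed identification of limits in your second step is unavailable.

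Moreover this is not a repairable bookkeeping slip; the intermediate limit is genuinely the wrong object, so the coinitiality asserted in your first step must fail as well. Take $T=\tau([2],\mathbf{1}=\mathbf{1}=\mathbf{1})$ and let $\mathcal{F}$ be the discrete Segal presheaf given by the nerve of a nontrivial monoid $M$, viewed as a one-colour operad with only unary operations. Since only linear objects of $\DFi$ admit maps to $T$, one has $(\DFi)_{/T}\simeq\simp_{/[2]}$, so the left-hand side of your comparison map is $\mathcal{F}([2],\mathbf{1}=\mathbf{1}=\mathbf{1})\simeq M^{2}$ by the Segal condition, and the right-hand side $\lim_{(\bbOelT)^{\op}}\mathcal{F}(\tau^{-1}E)$ is likewise $M^{2}$. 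But $(\DFel)_{/T}$ has, besides the spine, four extra objects --- one active $C_{1}\to T$ and three degenerate $C_{1}\to\eta\hookrightarrow T$, one for each edge --- and each of these receives morphisms in $(\DFel)_{/T}$ only from edge objects (whose $\mathcal{F}$-value is a point) and admits no nonidentity morphisms out; so each contributes a free factor of $M$, giving $\lim_{((\DFel)_{/T})^{\op}}\mathcal{F}\cong M^{6}$. Thus both of your cofinality claims fail. The conceptual reason is that the Segal condition forces the value of $\mathcal{F}$ on a ``composite'' cell $C_{k}\to T$ to agree with the composite of a spine only through the values of $\mathcal{F}$ on objects $X\in\DFi$ of length $\geq 2$, and these are exactly what your projection to $(\DFel)_{/T}$ discards; no cofinality argument over elementary objects alone can recover this constraint. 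This is precisely the difficulty the paper's proof is built to handle: it never reduces to elementaries over $T$, but instead inducts on the number of vertices using the external boundary $\dext T$ (Lemma~\ref{lem:dextTSeg}) and filters $\tau^{*}T$ by nondegenerate admissible cells attached along the horns of Lemma~\ref{lem:hornSegeq} --- the inner-horn fillings being exactly where ``composites are determined by spines'' enters.
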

Our proof of Proposition~\ref{propn:tau*segeq} is based on the proof
of \cite{HeutsHinichMoerdijkDendrComp}*{Proposition 5.5.9}. Before we
give it, we must introduce some notation and prove two technical
lemmas:

\begin{defn}
  For $T \in \bbO$ a tree with at least two vertices, let $\dext T$
  denote the \emph{external boundary} of $T$, namely the presheaf on
  $\bbO$ constructed as the union of all the \emph{external faces} of
  $T$. To be precise, let $\txt{Sub}(T)$ be the
  full subcategory of $(\bbOint)_{/T}$ on the proper subtrees of $T$ and define
  $\dext T$ to be the colimit of the composition $\txt{Sub}(T) \to
  \bbO \to \mathcal{P}(\bbO)$.
\end{defn}

\begin{lemma}\label{lem:dextTSeg}
  For $T$ in $\bbO$ with at least two vertices, let $(\dext T)_{\Seg}$ denote the colimit
  of the functor \[\txt{Sub}(T) \to \mathcal{P}(\bbO), \quad
  S \mapsto S_{\Seg}\] (this is well-defined
  since the maps in $\bbO$ involved are all inert). Then the natural
  map $(\dext T)_{\Seg} \to T_{\Seg}$ is an equivalence.
\end{lemma}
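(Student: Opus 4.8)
The plan is to exhibit both $(\dext T)_{\Seg}$ and $T_{\Seg}$ as colimits of the \emph{same} diagram $\bbOelT \to \mathcal{P}(\bbO)$ — namely the one sending an elementary subtree of $T$ to the corresponding representable presheaf — reindexed along a cofinal functor. By definition $S_{\Seg} = \colim_{E \in \bbOel_{/S}} E$, so $(\dext T)_{\Seg} = \colim_{S \in \txt{Sub}(T)} \colim_{E \in \bbOel_{/S}} E$ is an iterated colimit. First I would rewrite this as a single colimit: let $\mathcal{E} \to \txt{Sub}(T)$ be the (covariant) Grothendieck construction of the functor $S \mapsto \bbOel_{/S}$, so that the objects of $\mathcal{E}$ are pairs $(S, E)$ with $S$ a proper subtree of $T$ and $E$ an elementary subtree of $S$. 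The standard decomposition of a colimit over a coCartesian fibration (\cite[Corollary 4.2.3.10]{HTT}) then identifies $(\dext T)_{\Seg}$ with $\colim_{(S,E) \in \mathcal{E}} E$, the colimit of the composite $\mathcal{E} \xto{\pi} \bbOelT \to \mathcal{P}(\bbO)$, where $\pi(S,E) = E$ regarded as an elementary subtree of $T$ via $E \hookrightarrow S \hookrightarrow T$. On the other hand $T_{\Seg} = \colim_{E \in \bbOelT} E$ is the colimit of the same diagram over the base $\bbOelT$, and the natural map in the statement is precisely the comparison map induced by $\pi$.

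It therefore suffices to prove that $\pi \colon \mathcal{E} \to \bbOelT$ is cofinal. By \cite[Theorem 4.1.3.1]{HTT} this amounts to showing that for every elementary subtree $E \hookrightarrow T$ the comma category $\mathcal{E} \times_{\bbOelT} (\bbOelT)_{E/}$ is weakly contractible, and I claim it has an initial object. This is exactly where the hypothesis that $T$ has at least two vertices enters: since $E$ is a corolla or an edge it has at most one vertex, so $E$ is itself a \emph{proper} subtree of $T$, and hence $(E, E, \id_E)$ is a legitimate object of this comma category. Unwinding the definition of morphisms in the Grothendieck construction, a map from $(E, E, \id_E)$ to an object $(S, E', E \xto{\alpha} E')$ consists of the inert inclusion $E \hookrightarrow S$ together with a morphism $E \to E'$ over $S$ compatible with $\alpha$; since $\alpha$ is fixed, there is exactly one such map, so $(E, E, \id_E)$ is initial. (Equivalently, $\pi$ admits the fully faithful left adjoint $E \mapsto (E, E)$, so $\pi$ is a right adjoint and therefore cofinal.)

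Combining the two steps, cofinality of $\pi$ yields $(\dext T)_{\Seg} = \colim_{(S,E) \in \mathcal{E}} E \simeq \colim_{E \in \bbOelT} E = T_{\Seg}$, compatibly with the natural comparison map, which is the assertion. The only genuinely delicate point is to keep track of the morphisms in $\mathcal{E}$ carefully enough to see that a morphism out of $(E, E, \id_E)$ is uniquely pinned down; once the indexing categories and the functor $\pi$ are set up correctly, the weak contractibility of the comma categories — and with it the whole lemma — reduces to the single observation that every elementary subtree of a tree with at least two vertices is a proper subtree.
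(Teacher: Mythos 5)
Your proof is correct and follows essentially the same route as the paper's: both rewrite $(\dext T)_{\Seg}$ as a single colimit over the Grothendieck construction of $S \mapsto \bbOel_{/S}$ and then show the projection functor to $\bbOelT$ is cofinal by exhibiting an initial object (given by the identity of $E$) in each comma category. The only differences are cosmetic --- the paper cites a different reference for the iterated-colimit decomposition, and you spell out explicitly the point the paper leaves implicit, namely that the two-vertex hypothesis guarantees each elementary subtree $E$ is itself a \emph{proper} subtree of $T$.
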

\begin{proof}
  Let $\mathcal{I} \to \txt{Sub}(T)$ denote the
  Grothendieck opfibration associated to the functor sending $S$ to
  $\bbOel_{/S}$. By \cite{enrbimod}*{Corollary 5.7} we
  can regard $(\dext T)_{\Seg}$ as the colimit of the functor
  $\mathcal{I} \to \mathcal{P}(\bbO)$ sending
  $(S, (E \to S) \in \bbOel_{/S})$ to $E$, and
  the map $(\dext T)_{\Seg} \to T_{\Seg}$ is the map on colimits
  induced by the functor $\Phi \colon \mathcal{I} \to \bbOelT$ that
  takes $(S, E \to S)$ to $E \to S \to T$. It
  therefore suffices to prove that $\Phi$ is cofinal. But $\Phi$ admits a left adjoint, given by the functor sending $E \rightarrow T$ to $(E, E = E)$.  
\end{proof}

For the following definition and lemma it will be clearer to work with
(Segal presheaves on) $\DF$ rather than $\DFi$; this makes no
difference due to Lemma~\ref{lem:ieq}.

\begin{defn}
  Let $\mathrm{sd}(\Delta^n)$ denote the partially ordered set of faces of
  $\Delta^{n}$ (meaning injective maps $[m] \hookrightarrow [n]$ in
  $\simp$) or equivalently the partially ordered set of non-empty
  subsets of $\{0,\ldots,n\}$; in other words, $\mathrm{sd}(\Delta^n)$ is the \emph{barycentric subdivision} of $\Delta^n$.
  We will denote the subset
  $\{i_{1},\ldots,i_{k}\}$ where $i_{1} < i_{2} < \cdots <
  i_{k}$ by $(i_{1},\ldots,i_{k})$. Given a full subcategory
  (i.e. partially ordered subset) $\mathcal{G} \subseteq \mathrm{sd}(\Delta^n)$
  and $X \in \DF$ of length $n$, let $X(\mathcal{G})$ denote the
  colimit in $\mathcal{P}(\DF)$ over $\varphi \in \mathcal{G}$ of $\varphi^{*}X$. For
  $\mathrm{sd}(\Lambda^n_i)$ the subcategory containing all objects except
  $(0,\ldots,n)$ and $(0,\ldots,i-1,i+1,\ldots,n)$, we write
  $\Lambda^{n}_{i}X$ for $X(\mathrm{sd}(\Lambda^n_i))$.
\end{defn}

\begin{lemma}\label{lem:hornSegeq}
	For any $X \in \DF$ of length $n$, the map $\Lambda^{n}_{n-1}X \to X$
	is a Segal equivalence.
\end{lemma}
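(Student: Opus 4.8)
The plan is to adapt the strategy of Lemma~\ref{lem:dextTSeg}: I would replace each face of $X$ by its Segal resolution and reduce the statement to a single cofinality computation, after which the horn inclusion becomes a consequence of the Segal condition together with two-out-of-three.

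Concretely, for each face $\varphi \in \mathcal{F}^{n}$ write $(\varphi^{*}X)_{\Seg} = \colim_{E \in \DFel_{/\varphi^{*}X}} E$ and set $(\Lambda^{n}_{n-1}X)_{\Seg} := \colim_{\varphi \in \mathcal{F}^{n}_{n-1}} (\varphi^{*}X)_{\Seg}$. Each map $(\varphi^{*}X)_{\Seg} \to \varphi^{*}X$ is a Segal equivalence, and since $L_{\DF}$ preserves colimits a colimit of Segal equivalences is again a Segal equivalence; hence $(\Lambda^{n}_{n-1}X)_{\Seg} \to \Lambda^{n}_{n-1}X$ is a Segal equivalence, as is $X_{\Seg} \to X$ by definition. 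These fit into an evident commutative square whose top edge is $(\Lambda^{n}_{n-1}X)_{\Seg} \to X_{\Seg}$ and whose bottom edge is the map $\Lambda^{n}_{n-1}X \to X$ we are after. By two-out-of-three it therefore suffices to show the top edge is an \emph{equivalence}.

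For this I would argue exactly as in Lemma~\ref{lem:dextTSeg}. Applying \cite{enrbimod}*{Corollary 5.7} to the Grothendieck opfibration $\mathcal{I} \to \mathcal{F}^{n}_{n-1}$ classifying the covariant functor $\varphi \mapsto \DFel_{/\varphi^{*}X}$ identifies $(\Lambda^{n}_{n-1}X)_{\Seg}$ with $\colim_{\mathcal{I}} E$, and under this identification the map to $X_{\Seg} = \colim_{E \in \DFelX} E$ is induced by the functor $\Phi \colon \mathcal{I} \to \DFelX$ sending $(\varphi, E \hookrightarrow \varphi^{*}X)$ to the composite $E \hookrightarrow \varphi^{*}X \hookrightarrow X$. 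By \cite{HTT}*{Theorem 4.1.3.1} it then suffices to prove $\Phi$ is cofinal, i.e. that for every $\varepsilon \colon E \to X$ in $\DFelX$ the comma category $\mathcal{I}_{\varepsilon/} = \mathcal{I} \times_{\DFelX} (\DFelX)_{\varepsilon/}$ is weakly contractible.

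I expect each $\mathcal{I}_{\varepsilon/}$ to have an initial object. For an elementary $\varepsilon$ there is a smallest face $\varphi_{0}$ of $\Delta^{n}$ through which it factors --- the vertex $(i)$ if $E = ([0], \mathbf{1})$ sits at level $i$, and the edge $(i, i+1)$ if $E = ([1], \mathbf{k} \to \mathbf{1})$ sits at levels $i, i+1$ --- and $(\varphi_{0}, \id_{E})$ is initial provided $\varphi_{0}$ lies in $\mathcal{F}^{n}_{n-1}$. The main obstacle is exactly this proviso: one must check that the minimal supporting face $\varphi_{0}$ is never one of the two faces deleted to form $\mathcal{F}^{n}_{n-1}$, namely the top face $(0,\ldots,n)$ and the codimension-one face $(0,\ldots,n-2,n)$. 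Since for $n \geq 2$ these have $n+1$ and $n$ vertices while $\varphi_{0}$ has at most two, this holds. This is precisely the step where deleting the \emph{codimension-one} face, rather than a spine edge, is essential: had we instead removed an edge $(i,i+1)$ --- as happens for an outer horn when $n = 2$ --- the corolla supported there would lose its minimal face and $\mathcal{I}_{\varepsilon/}$ would be empty. The base case $n = 2$, where $\Lambda^{2}_{1}X \to X$ is just the spine inclusion, is subsumed in this argument.
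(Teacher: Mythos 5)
Your reduction breaks down at the very first step: the diagram $\varphi \mapsto (\varphi^{*}X)_{\Seg}$ that is supposed to define $(\Lambda^{n}_{n-1}X)_{\Seg}$ is not a functor on $\mathcal{F}^{n}_{n-1}$. The Segal resolution $Y_{\Seg} = \colim_{E \in \DFel_{/Y}} E$ is functorial only along \emph{inert} maps, since only then does composition carry elementary objects over $Y$ to elementary objects over $Y'$; this is precisely why Lemma~\ref{lem:dextTSeg} can form the colimit of $S \mapsto S_{\Seg}$ over $\txt{Sub}(T)$ (the paper notes parenthetically that all maps involved there are inert). By contrast, as soon as $n \geq 3$ the poset $\mathcal{F}^{n}_{n-1}$ contains inclusions such as $(0,2) \subseteq (0,1,2)$, for which the induced map $\varphi^{*}X \to \varphi'^{*}X$ in $\DF$ is Cartesian over a non-inert operator of $\simp$. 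For such an inclusion there is no map $(\varphi^{*}X)_{\Seg} \to (\varphi'^{*}X)_{\Seg}$ over $X$ at all: the resolution of $(0,2)^{*}X$ is built from corollas of the \emph{composite} map $f(0) \to f(2)$, and these admit no factorization through the Segal resolution of $(0,1,2)^{*}X$ compatible with the maps to $X$ --- in the linear case this is exactly the familiar fact that the long edge of $\Delta^{2}$ does not factor through its spine. The same defect kills your comparison functor $\Phi \colon \mathcal{I} \to \DFelX$: for an object $(\varphi, E \to \varphi^{*}X)$ of $\mathcal{I}$ the composite $E \to \varphi^{*}X \to X$ need not be inert, hence need not be an object of $\DFelX$ (which by definition consists of inert maps from elementary objects).

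This is not a repairable technicality; it is the actual content of the lemma. The horn contains faces lying over non-inert simplicial operators, and relating their Segal decompositions (made of ``long'' corollas) to $X_{\Seg}$ (made of corollas at consecutive levels) is exactly what requires work; your commutative square, if it existed, would amount to asserting that the colimit of the spines of all faces of a simplex maps by an equivalence to the spine of the simplex, which is false already for $\Delta^{3}$. This is why the paper's proof takes a different route: it filters $\mathcal{F}^{n}_{n-1}$ by the subposets $\mathcal{G}_{0} \subseteq \cdots \subseteq \mathcal{G}_{n-2}$ (each refined further by the $\mathcal{H}^{j}_{d}$), arranges via \cite{HTT}*{Corollary 4.2.3.10} that each step of the filtration is a pushout along a coproduct of lower-dimensional horns $\Lambda^{d+1}_{d}\sigma^{*}X \to \sigma^{*}X$, and then inducts on $n$; the non-inert (long) faces are exactly what this induction absorbs. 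Your argument is valid only in the case $n = 2$, where every inclusion in $\mathcal{F}^{2}_{1}$ happens to be inert and the horn coincides with the spine-type object; for $n \geq 3$ you need a genuinely different mechanism, such as the paper's filtration-plus-induction.
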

\begin{proof}
	By the 2-out-of-3 property, it suffices to show that the map
	$X_{\Seg} \to \Lambda^{n}_{n-1}X$ is a Segal equivalence. To prove
	this, we consider the following filtration of $\mathrm{sd}(\Lambda^n_{n-1})$:
	we let $\mathcal{G}_{d} \subseteq \mathrm{sd}(\Lambda^n_{n-1})$ contain all
	subsets of length $\leq d$ together with those of length $d+1$ that are
	of the form $(i_{0},\ldots,i_{d-1}, i_{d}-1, i_{d})$. Then
	$\mathcal{G}_{n-2} = \mathrm{sd}(\Lambda^n_{n-1})$ and we have a filtration
	\[ X_{\Seg} \to X(\mathcal{G}_{0}) \to X(\mathcal{G}_{1}) \to
	\cdots \to X(\mathcal{G}_{n-2}) \simeq \Lambda^{n}_{n-1}X.\]
	
	It thus suffices to show that the maps $X_{\Seg} \to
	X(\mathcal{G}_{0})$ and $X(\mathcal{G}_{d-1}) \to
	X(\mathcal{G}_{d})$ ($d = 1,\ldots, n-2$) are Segal equivalences.
	
	Note that the map $X_{\Seg} \to X(\mathcal{G}_0)$ is an equivalence by construction; indeed, $\mathcal{G}_0$ consists of the subsets with one element, together with the subsets of the form $(i,i+1)$. To see that $X(\mathcal{G}_{d-1}) \to X(\mathcal{G}_{d})$ is a
	Segal equivalence, we consider a filtration
	\[ \mathcal{G}_{d-1} = \mathcal{H}_{d}^{d} \subseteq
	\mathcal{H}^{d+1}_{d} \subseteq \cdots \subseteq \mathcal{H}^{n}_{d}
	= \mathcal{G}_{d},\] where $\mathcal{H}_{d}^{j}$ is the full subcategory containing
	$\mathcal{G}_{d-1}$ together with those objects $(i_{0},\ldots,i_{k})$ of $\mathcal{G}_d$ with $i_k \leq j$.
	Let $T^{j}_{d}$ denote the objects of length $d+1$ in
	$\mathcal{H}_{d}^{j}$ that do not lie in
	$\mathcal{H}_{d}^{j-1}$. Note that for every $\sigma \in T^{j}_{d}$
	the $d$-dimensional face $d_{i}\sigma$ lies in $\mathcal{H}_{d}^{j-1}$ for
	$i \neq d$, while $d_{d}\sigma$ does \emph{not} lie in
	$\mathcal{H}_{d}^{j-1}$. Using \cite{HTT}*{Corollary 4.2.3.10} we
	therefore have pushout squares
	\nolabelcsquare{\coprod_{\sigma \in T^{j}_{d}}
		\Lambda^{d+1}_{d}\sigma^{*}X}{\coprod_{\sigma \in T^{j}_{d}}
		\sigma^{*}X}{X(\mathcal{H}_{d}^{j-1})}{X(\mathcal{H}_{d}^{j}).}
	Since  pushouts of Segal equivalences are again Segal equivalences,
	by inducting on $n$ this completes the proof.
\end{proof}

\begin{proof}[Proof of Proposition~\ref{propn:tau*segeq}]
  It suffices to show that the images under $\tau^*$ of the generating Segal
  equivalences $T_{\Seg}\to T$ for $T \in \bbO$ are Segal equivalences
  in $\mathcal{P}(\DFi)$. We will prove this by induction on the
  number of vertices in $T$, noting that if $T$ is $\eta$ or $T$ has one
  vertex, i.e. $T \in \bbO^{\txt{el}}$, then the statement is
  vacuous. Given $T \in \bbO$ with two or more vertices, we have a commutative square
  \nolabelcsquare{(\dext T)_{\Seg}}{\dext T}{T_{\Seg}}{T.}  Here the
  left vertical map is an equivalence by Lemma~\ref{lem:dextTSeg}, and
  the top horizontal map is the colimit over $S \in
  \txt{Sub}(T)$ of the maps $S_{\Seg} \to
  S$. Since $\tau^{*}$ preserves colimits and $S$ has fewer vertices than $T$ for all $S \in
  \txt{Sub}(T)$, we know by the inductive hypothesis that
  $\tau^{*}$ of this map is a Segal equivalence. By the 2-out-of-3
  property, to show that
  $\tau^{*} T_{\Seg} \to \tau^{*}T$ is a Segal equivalence it
  therefore suffices to show that $\tau^{*}(\dext T) \to \tau^{*} T$ is
  a Segal equivalence.

  To prove this, we will consider a filtration on $\tau^{*}T$. In
  order to define this we must first introduce some terminology; let
  us say that a map $\varphi \colon X \to \tau^{*}T$ is
  \emph{non-degenerate} if it does not factor through any non-trivial
  surjections in $\DFi$ --- more precisely, we require that for every
  factorization $X \xto{\psi} Y \to \tau^{*}T$ with $\psi$ a
  surjective map in $\DFi$, the map $\psi$ must be an isomorphism. We
  then say that $\varphi$ is \emph{admissible} if it is non-degenerate
  and preserves the root vertex --- more precisely, recall that if $X
  = ([n], f)$, then the adjunct map $\tau(X) \to T$ is a
  map of polynomial endofunctors
  \[
  \begin{tikzcd}
    \coprod_{i = 0}^{n} f(i) \arrow{d}& \coprod_{i = 0}^{n-1} f(i)
    \arrow{l} \arrow{r} \arrow{d}& \coprod_{i = 1}^{n} f(i)
    \arrow{r}\arrow{d} & \coprod_{i = 0}^{n} f(i) \arrow{d}\\
    T_{0} & \txt{sub}'(T) \arrow{r} \arrow{l} &  \txt{sub}(T) \arrow{r} & T_{0};
  \end{tikzcd}
  \]
  we say that $\varphi$ is admissible if it is non-degenerate and the map
  $\coprod_{i = 1}^{n} f(i) \to \txt{sub}(T)$ takes the root vertex of
  $X$, i.e. $f(n) = \mathbf{1}$, to the root corolla of $T$ viewed as a
  subtree of $T$. 

  We now define a subpresheaf $F_n$ of the (discrete) presheaf $\tau^*T$ as follows: $F_n(X)$ is the union of the image of $\tau_*(\dext T)(X)$ with the maps $X \to \tau^*T$ that factor through an admissible morphism  $Y \to \tau^{*}T$ with $Y$ of length $\leq n$.

  Every map $\tau(Y) \to T$ with $Y \in \DFi$ factors through an
  admissible map, so $\tau^{*}T \simeq
  \colim_{n \to \infty} F_{n}$; it hence suffices to show that the
  inclusions $F_{n-1} \hookrightarrow F_{n}$ are all Segal
  equivalences. Let $S_{n}$ denote the set of isomorphism classes of
  admissible maps $\varphi \colon \tau(X) \to T$ where $X \in \DFi$ is of
  length $n$. For such a $\varphi$, we have that:
  \begin{itemize}
  \item By the assumption that $\varphi$ is non-degenerate, the faces $d_{i}^{*}X \to X \to \tau^{*}T$ with $i = 0,n$
    factor through $\tau^{*}(\dext T)$, and so in particular through
    $F_{n-1}$.
  \item The faces $d_{i}^{*}X \to X \to \tau^{*}T$ with $0 < i < n-1$
    are admissible of length $n-1$ and so factor through $F_{n-1}$.
  \item The face $d_{n-1}^{*}X \to X \to \tau^{*}T$ is \emph{not}
    admissible --- if it were, then it is straightforward to see that
    $\varphi$ must be degenerate, which is not the case by assumption.
  \end{itemize}
  Note also that if $\varphi \colon X \to \tau^{*}T$ is non-degenerate
  and doesn't factor through $\dext T$, but is not admissible, with
  $X$ of length $n-1$, then there exists (up to isomorphism) a
  \emph{unique} admissible map $\varphi' \colon X' \to \tau^{*}T$ with
  $X'$ of length $n$ such that $d_{n-1}^{*}X' \to X' \to \tau^{*}T$
  equals $\varphi$. Choosing representatives for the elements of
  $S_{n}$ therefore gives a pushout diagram
  \nolabelcsquare{\coprod_{S_{n}}
    \Lambda^{n}_{n-1}X}{F_{n-1}}{\coprod_{S_{n}} X}{F_{n}.}  Here the
  left vertical morphism is a Segal equivalence by
  Lemma~\ref{lem:hornSegeq}, hence so is the right vertical morphism.
\end{proof}

\section{Completion}\label{sec:complete}

\begin{defn}
  Let $u \colon \simp \hookrightarrow \DFi$ denote the fully faithful
  inclusion given by sending $[n]$ to $([n], \mathbf{1} = \mathbf{1} =
  \cdots =\mathbf{1})$. If $\mathcal{F} \colon \DFiop \to \mathcal{S}$
  is a Segal presheaf, then $u^{*}\mathcal{F}$ is a Segal space in the
  sense of Rezk~\cite{RezkCSS}. We say that $\mathcal{F}$ is
  \emph{complete} if the Segal space $u^{*}\mathcal{F}$ is
  complete. Similarly, we say a Segal presheaf $\mathcal{F} \colon
  \DF^{\op} \to \mathcal{S}$ is \emph{complete} if
  $u^{*}i^{*}\mathcal{F}$ is a complete Segal space, and that a Segal
  presheaf $\mathcal{F} \colon \bbO^{\op}\to \mathcal{S}$ is complete
  \IFF{} $u^{*}\tau^{*}\mathcal{F}$ is a complete Segal space. We
  write $\PCSDFi$, $\PCSDF$, and $\PCSO$ for the full subcategories of
  $\PSegDFi$, $\PSegDF$, and $\PSegO$, respectively, spanned by the
  complete Segal presheaves.
\end{defn}

\begin{remark}
  The \icats{} $\PCSDFi$, $\PCSDF$, and $\PCSO$ are accessible
  localizations of $\PSegDFi$, $\PSegDF$, and $\PSegO$,
  respectively. In particular, the inclusions $\PCSDFi \hookrightarrow
  \PSegDFi$, $\PCSDF \hookrightarrow \PSegDF$, and $\PCSO
  \hookrightarrow \PSegO$ all have left adjoints.
\end{remark}

Putting together our results from the previous sections, we get:
\begin{cor}\label{cor:itauCSeq}
  Composition with the functors $i$ and $\tau$ give equivalences of
  \icats{}
  \[ \PCSO \isoto \PCSDFi \isofrom \PCSDF.\]
\end{cor}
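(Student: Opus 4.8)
The plan is to observe that the corollary is essentially formal once one unwinds the definition of completeness. The key point is that, by design, the notions of completeness for Segal presheaves on $\DF$ and on $\bbO$ are nothing but the notion of completeness on $\DFi$ transported through the functors $i^{*}$ and $\tau^{*}$, respectively. Thus the two equivalences we have already established automatically match up the complete objects on both sides.

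First I would recall that $i^{*} \colon \PSegDF \to \PSegDFi$ is an equivalence by Lemma~\ref{lem:ieq}, and that $\tau^{*} \colon \PSegO \to \PSegDFi$ is an equivalence by Theorem~\ref{thm:taueq}. Next I would note that, directly from the definition, a Segal presheaf $\mathcal{F} \in \PSegDF$ is complete \IFF{} $u^{*}i^{*}\mathcal{F}$ is a complete Segal space, i.e. \IFF{} $i^{*}\mathcal{F} \in \PSegDFi$ is complete. Hence $i^{*}$ carries $\PCSDF$ into $\PCSDFi$ and, conversely, reflects the property of being complete. Since $i^{*}$ is an equivalence and the complete objects on each side correspond precisely to one another under it, the equivalence restricts to an equivalence $\PCSDF \isoto \PCSDFi$ between the full subcategories they span.

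In exactly the same way, a Segal presheaf $\mathcal{F} \in \PSegO$ is complete \IFF{} $u^{*}\tau^{*}\mathcal{F}$ is a complete Segal space, i.e. \IFF{} $\tau^{*}\mathcal{F} \in \PSegDFi$ is complete. Therefore $\tau^{*}$ identifies the complete objects of $\PSegO$ with those of $\PSegDFi$, and since it is an equivalence it restricts to an equivalence $\PCSO \isoto \PCSDFi$. Composing (or rather juxtaposing) these two restricted equivalences yields the asserted chain $\PCSO \isoto \PCSDFi \isofrom \PCSDF$.

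I do not expect any genuine obstacle here. The only general fact invoked is that an equivalence of \icats{} restricts to an equivalence between full subcategories that are carried to each other under it; this applies because completeness on $\DF$ and $\bbO$ is defined to hold exactly when the image in $\PSegDFi$ is complete, so the full subcategories of complete objects correspond strictly under $i^{*}$ and $\tau^{*}$. All the real content has already been placed in Lemma~\ref{lem:ieq}, Theorem~\ref{thm:taueq}, and the definition of completeness, so this corollary is an immediate consequence.
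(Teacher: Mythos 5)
Your proposal is correct and is exactly the paper's argument: the paper's proof simply declares the corollary ``immediate from Theorem~\ref{thm:taueq}, Lemma~\ref{lem:ieq}, and the definition of complete Segal presheaves,'' and your write-up just spells out why this is immediate --- namely that completeness on $\DF$ and $\bbO$ is \emph{defined} by transport along $i^{*}$ and $\tau^{*}$, so the equivalences restrict to the full subcategories of complete objects.
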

\begin{proof}
  Immediate from Theorem~\ref{thm:taueq}, Lemma~\ref{lem:ieq}, and
  the definition of complete Segal presheaves.
\end{proof}

Using results of Cisinski and Moerdijk in the context of dendroidal
Segal spaces, this allows us to characterize the morphisms that are
local equivalences with respect to the complete objects as the fully
faithful and essentially surjective morphisms, in the following sense:
\begin{defn}
  A morphism $\varphi \colon \mathcal{F} \to \mathcal{G}$ of Segal
  presheaves on $\bbO$ is \emph{fully faithful} if for every $n$ the
  commutative square
  \nolabelcsquare{\mathcal{F}(C_{n})}{\mathcal{G}(C_{n})}{\mathcal{F}(\eta)^{\times
      (n+1)}}{\mathcal{G}(\eta)^{\times (n+1)}}
  is a pullback square.
  We say $\varphi$ is \emph{essentially surjective} if the morphism
  $u^{*}\tau^{*}\mathcal{F} \to u^{*}\tau^{*}\mathcal{G}$ of Segal
  spaces is essentially surjective. Obvious variants of this
  definition also give notions of fully faithful and essentially
  surjective morphisms between Segal presheaves on $\DF$ and $\DFi$.
\end{defn}

\begin{cor}\label{cor:complete}
  A morphism of Segal presheaves (on $\DF$, $\DFi$, or $\bbO$) maps to
  an equivalence of complete Segal presheaves \IFF{} it is fully
  faithful and essentially surjective. In other words, the
  localization functors from Segal presheaves to complete Segal
  presheaves exhibit the latter as the localization of the Segal
  presheaves at the fully faithful and essentially surjective functors.
\end{cor}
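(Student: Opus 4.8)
The plan is to reduce to the case of $\bbO$ and then combine Rezk's completeness theorem with the dendroidal analysis of Cisinski and Moerdijk. By Corollary~\ref{cor:itauCSeq} and Theorem~\ref{thm:mainthm}, the functors $i^{*}$ and $\tau^{*}$ identify the three \icats{} $\PSegDF$, $\PSegDFi$, $\PSegO$ and their subcategories of complete objects, compatibly with the completion functors. These equivalences also preserve and detect the relevant classes of maps: essential surjectivity is by definition a condition on the common underlying Segal space $u^{*}\tau^{*}$ (respectively $u^{*}$), and under $\tau^{*}$ the full faithfulness condition on $\bbO$---formulated via the corollas $C_{n}$ and the edge $\eta$---translates into the corresponding condition on $\DFi$, since by Lemma~\ref{lem:taueleq} the functor $\tau$ carries the elementary objects $([1], \mathbf{k}\to\mathbf{1})$ and $([0],\mathbf{1})$ to $C_{k}$ and $\eta$. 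It therefore suffices to prove the statement for Segal presheaves on $\bbO$. Write $L_{\txt{cpl}} \colon \PSegO \to \PCSO$ for the completion and $\mathcal{F} \to \widehat{\mathcal{F}}$ for its unit; a morphism maps to an equivalence in $\PCSO$ \IFF{} it is a \emph{completion equivalence}, i.e. $L_{\txt{cpl}}\varphi$ is an equivalence.

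First I would record that the class $W$ of fully faithful and essentially surjective maps satisfies the two-out-of-three property. Passing to the fibres of the pullback squares defining full faithfulness, a map $\varphi$ is fully faithful \IFF{} it induces equivalences on all multi-mapping spaces, and it lies in $W$ \IFF{} in addition the induced functor of homotopy categories is an equivalence. The only subtle instance of two-out-of-three---deducing full faithfulness of $\psi$ from that of $\varphi$ and $\psi\varphi$---is then settled using the essential surjectivity of $\varphi$, which reduces any pair of colours in the target to one in the image, where full faithfulness is already known.

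Next I would prove that a morphism $\varphi \colon \mathcal{F} \to \mathcal{G}$ in $W$ between \emph{complete} Segal presheaves is an equivalence. By full faithfulness at $n = 1$ together with essential surjectivity, the underlying map $u^{*}\tau^{*}\varphi$ is a Dwyer--Kan equivalence between complete Segal spaces, hence a levelwise equivalence by Rezk~\cite{RezkCSS}; in particular $\varphi$ is an equivalence on $\eta$ and on all linear trees. Since $\varphi$ is an equivalence on $\eta$, the pullback squares defining full faithfulness force $\varphi$ to be an equivalence on every corolla $C_{n}$, and the Segal condition then exhibits $\mathcal{F}(T)$ and $\mathcal{G}(T)$ as limits over $\bbOelT$ of the values at corollas and edges, so $\varphi$ is an equivalence on all $T \in \bbO$.

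Finally, I would invoke the work of Cisinski and Moerdijk~\cite{CisinskiMoerdijkDendSeg} to conclude that the completion unit $\mathcal{F} \to \widehat{\mathcal{F}}$ lies in $W$: its underlying map of Segal spaces is the Rezk completion, hence essentially surjective and fully faithful at $n=1$, and since completion only adjoins equivalences between colours it leaves the multi-mapping spaces unchanged, giving the pullback squares at the higher corollas. Granting this, the corollary follows formally from two-out-of-three applied to the naturality square of the unit: if $\varphi$ is a completion equivalence then $\widehat{\varphi}$ is an equivalence and the two vertical units lie in $W$, whence $\varphi \in W$; conversely, if $\varphi \in W$ then $\widehat{\varphi} \in W$ is a map between complete objects, hence an equivalence by the previous step, so $\varphi$ is a completion equivalence. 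The main obstacle is precisely this last input---controlling the completion on all trees, and not merely on the underlying Segal space---which is exactly where the dendroidal results of Cisinski and Moerdijk are needed; everything else is a formal consequence of Rezk's theorem and the comparison results established earlier in the paper.
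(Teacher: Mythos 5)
Your proposal is correct, and its skeleton is the same as the paper's: reduce to the case of $\bbO$ using Lemma~\ref{lem:ieq} and Theorem~\ref{thm:taueq}, after checking that completeness, full faithfulness, and essential surjectivity all correspond under $i^{*}$ and $\tau^{*}$ (your observation via Lemma~\ref{lem:taueleq} is exactly the relevant point), and then appeal to Cisinski--Moerdijk for the dendroidal case. The difference lies in how much of \cite{CisinskiMoerdijkDendSeg} you invoke. The paper cites their Theorem 8.11, which is verbatim the $\bbO$ case of the corollary, so for the paper nothing further remains to be proved. You instead cite them only for the single statement that the completion unit $\mathcal{F} \to \widehat{\mathcal{F}}$ is fully faithful and essentially surjective, and rederive the rest yourself: two-out-of-three for the class $W$; the fact that a map in $W$ between \emph{complete} Segal presheaves is an equivalence (Rezk's theorem gives a levelwise equivalence of the underlying complete Segal spaces, in particular on $\eta$, the pullback squares then give equivalences on all corollas, and the Segal condition propagates this to all trees --- this step is correct and self-contained); and the formal naturality-square argument. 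Your decomposition buys a cleaner isolation of the non-formal content: it makes explicit that, granting Rezk's theorem and the comparison results of the paper, the only genuinely dendroidal input is the behaviour of the completion on higher corollas, everything else being formal localization theory. What it costs is redundancy: the completion-unit statement you defer to Cisinski--Moerdijk is most easily extracted from their Theorem 8.11 itself (the fibrant replacement map in their complete model structure is a weak equivalence there, hence fully faithful and essentially surjective by that very theorem), and once that theorem is cited your intermediate steps are no longer needed. Either way the argument is sound.
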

\begin{proof}
  For $\bbO$, this holds by \cite[Theorem
  8.11]{CisinskiMoerdijkDendSeg}. The other two cases then follow from
  Theorem~\ref{thm:taueq}, Lemma~\ref{lem:ieq}, and the definitions of
  complete objects and fully faithful and essentially surjective morphisms.
\end{proof}

\begin{bibdiv}
\begin{biblist}
\bib{BarwickOpCat}{article}{
  author={Barwick, Clark},
  title={From operator categories to topological operads},
  eprint={arXiv:1302.5756},
  year={2013},
  note={To appear in \emph {Geom. Topol.}},
}

\bib{CisinskiMoerdijkDendSeg}{article}{
  author={Cisinski, Denis-Charles},
  author={Moerdijk, Ieke},
  title={Dendroidal Segal spaces and $\infty $-operads},
  journal={J. Topol.},
  volume={6},
  date={2013},
  number={3},
  pages={675--704},
}

\bib{CisinskiMoerdijkSimplOpd}{article}{
  author={Cisinski, Denis-Charles},
  author={Moerdijk, Ieke},
  title={Dendroidal sets and simplicial operads},
  journal={J. Topol.},
  volume={6},
  date={2013},
  number={3},
  pages={705--756},
  eprint={arXiv:1109.1004},
}

\bib{enrbimod}{article}{
  author={Haugseng, Rune},
  title={Bimodules and natural transformations for enriched $\infty $-categories},
  date={2015},
  eprint={arXiv:1506.07341},
  journal={Homology Homotopy Appl.},
  volume={18},
  date={2016},
  pages={71--98},
}

\bib{HeutsHinichMoerdijkDendrComp}{article}{
  author={Heuts, Gijs},
  author={Hinich, Vladimir},
  author={Moerdijk, Ieke},
  title={On the equivalence between Lurie's model and the dendroidal model for infinity-operads},
  eprint={arXiv:1305.3658},
  journal={Adv. Math.},
  volume={302},
  date={2016},
  pages={869--1043},
}

\bib{JoyalQCNotes}{article}{
  author={Joyal, Andr\'{e}},
  title={Notes on quasi-categories},
  eprint={http://www.math.uchicago.edu/~may/IMA/Joyal.pdf},
}

\bib{KockTree}{article}{
  author={Kock, Joachim},
  title={Polynomial functors and trees},
  journal={Int. Math. Res. Not. IMRN},
  date={2011},
  number={3},
  pages={609--673},
}

\bib{HTT}{book}{
  author={Lurie, Jacob},
  title={Higher Topos Theory},
  series={Annals of Mathematics Studies},
  publisher={Princeton University Press},
  address={Princeton, NJ},
  date={2009},
  volume={170},
  note={Available at \url {http://math.harvard.edu/~lurie/papers/highertopoi.pdf}},
}

\bib{HA}{book}{
  author={Lurie, Jacob},
  title={Higher Algebra},
  date={2014},
  note={Available at \url {http://math.harvard.edu/~lurie/papers/higheralgebra.pdf}},
}

\bib{MayGeomIter}{book}{
  author={May, J. P.},
  title={The geometry of iterated loop spaces},
  note={Lectures Notes in Mathematics, Vol. 271},
  publisher={Springer-Verlag, Berlin-New York},
  date={1972},
}

\bib{MoerdijkWeiss}{article}{
  author={Moerdijk, Ieke},
  author={Weiss, Ittay},
  title={Dendroidal sets},
  journal={Algebr. Geom. Topol.},
  volume={7},
  date={2007},
  pages={1441--1470},
}

\bib{RezkCSS}{article}{
  author={Rezk, Charles},
  title={A model for the homotopy theory of homotopy theory},
  journal={Trans. Amer. Math. Soc.},
  volume={353},
  date={2001},
  number={3},
  pages={973--1007 (electronic)},
}

\bib{RobertsonThesis}{article}{
  author={Robertson, Marcy},
  title={The homotopy theory of simplicially enriched multicategories},
  eprint={arXiv:1111.4146},
  date={2011},
}
\end{biblist}
\end{bibdiv}

\end{document}